\theoremstyle{plain}
\newtheorem*{theorem*}{Theorem}
\newtheorem*{lemma*} {Lemma}
\newtheorem*{corollary*} {Corollary}
\newtheorem*{proposition*}{Proposition}
\newtheorem*{conjecture*}{Conjecture}
\newtheorem{theorem}{Theorem}[section]
\newtheorem{lemma}[theorem]{Lemma}
\newtheorem*{theorem1*}{Theorem 1}
\newtheorem*{theorem2*}{Theorem 2}
\newtheorem*{theorem3*}{Theorem 3}
\newtheorem{corollary}[theorem]{Corollary}
\newtheorem{proposition}[theorem]{Proposition}
\newtheorem{question}[theorem]{Question}
\newcommand{\co}{\colon \thinspace}
\theoremstyle{remark}
\newtheorem*{remark}{Remark}
\newtheorem{example*}{Example}
\theoremstyle{definition}
\def\op{\operatorname}
\def\G{\Gamma}
\def\tor{\op{Tor}}
\def\gl{\op{GL}}   \def\Z{\Bbb{Z}} \def\R{\Bbb{R}} \def\C{\Bbb{C}}
\def\N{\Bbb{N}}    
 \def\a{\alpha} \def\g{\gamma} \def\tor{\op{Tor}} \def\bp{\begin{pmatrix}}
\def\sm{\setminus} \def\ep{\end{pmatrix}} \def\bn{\begin{enumerate}} 
   \def\en{\end{enumerate}}
\def\ba{\begin{array}} \def\ea{\end{array}} \def\L{\Lambda} 
 \def\S{\Sigma}  \def\a{\alpha} \def\b{\beta} \def\wti{\wtilde}
\def\ker{\op{ker}}\def\be{\begin{equation}} \def\ee{\end{equation}} 
  \def\ord{\op{ord}} 
 \def\hom{\op{Hom}}  
 \def\aut{\op{Aut}}  
 \def\dim{\op{dim}}  
\def\fkt{\F[t^{\pm 1}]^k} \def\ft{\F[t^{\pm
1}]}       
  \def\ztn{\Z[\Z_n]}
\def\fktn{\F[\Z_n]^k}
 \def\zpktn{\F_p[\Z_n]^k} 
\def\zt{\Z[t^{\pm 1}]}    
\def\ct{\C[t^{\pm 1}]}
\def\wti{\widetilde}
\def\what{\widehat}
\def\F{\Bbb{F}}
\begin{document}
\title[{The profinite completion, fiberedness and the Thurston norm}]{The profinite completion of $3$-manifold groups, fiberedness and the Thurston norm}
\author{Michel Boileau}
\address{
Institut de Math\'ematiques de Marseille\\Aix Marseille Universit\'e\\France}
\email{michel.boileau@cmi.univ-mrs.fr }

\author{Stefan Friedl}
\address{Fakult\"at f\"ur Mathematik\\ Universit\"at Regensburg\\   Germany}
\email{sfriedl@gmail.com}

\begin{abstract}
We show that a regular isomorphism of profinite completion of the fundamental groups of two 3-manifolds $N_1$ and $N_2$ induces an isometry of the Thurston norms and a bijection between the fibered classes. We study to what extent does the profinite completion of knot groups distinguish knots and show that it distinguishes each torus knot and the figure eight knot among all knots. We show also that it distinguishes between hyperbolic knots with cyclically commensurable complements under the assumption that their Alexander polynomials have at least one zero which is not a root of unity.
\end{abstract}

\maketitle

\hfill {\emph{Dedicated to the memory of William Thurston}
\section{Introduction}

In this paper we will study the question, what properties of 3-manifolds are determined by the set of finite quotients of their fundamental groups.
The modern reformulation of the above question (see Lemma~\ref{lem:finitequotients} for details) is to ask, what properties of a  3-manifold $N$ are determined by the profinite completion $\what{\pi_1(N)}$ of its fundamental group. Here, and throughout the paper all 3-manifolds are understood to be compact, orientable, connected with empty or toroidal boundary.

Wilton--Zalesskii \cite{WZ14} showed that the profinite completion of the fundamental group of a closed $3$-manifold  can detect whether or not it is hyperbolic, furthermore they showed that it can detect whether or not it is Seifert fibered. On the other hand it is known by work of Funar \cite{Fu13} and Hempel \cite{He14} that the profinite completion of the fundamental group can not always distinguish between pairs of torus bundles and between certain pairs of Seifert manifolds. It is still an open question though whether the profinite completion can distinguish any two hyperbolic 3-manifolds.

In this paper we are mostly interested in the relation between the profinite completion,  the fiberedness and the Thurston norm of a 3-manifold.
We quickly recall the relevant definitions. 
Given a surface $\S$ with connected components $\S_1,\dots,\S_k$  its complexity
is defined to be
\[ \chi_-(\S):=\sum_{i=1}^d \max\{-\chi(\S_i),0\}.\]
Given a 3-manifold $N$ and $\phi \in H^1(N;\Z)$ the Thurston norm is defined as
\[ x_N(\phi):= \min\{\chi_-(\S)\, |\, \S \subset N\mbox{ properly embedded and dual to }\phi\}.\]
 Thurston
\cite{Th86} showed that  $x_N$ is a seminorm on $H^1(N;\Z)$. It follows from standard arguments that $x_N$ extends to a seminorm on $H^1(N;\R)$.
We also recall that  an integral class $\phi \in H^1(N;\Z)=\hom(\pi_1(N),\Z)$ is called \emph{fibered}  if  there exists a fibration $p\colon N\to S^1$ such that
$\phi=p_*\colon \pi_1(N)\to \Z$. More generally, we say that a class $\phi \in H^1(N;\R)$ is  \emph{fibered}  if $\phi$ can be represented by a nowhere-vanishing closed 1--form. By \cite{Ti70}  the two notions of being fibered coincide for integral cohomology classes.

In the following let $N_1$ and $N_2$ be two 3-manifolds and suppose there exists an isomorphism $f\colon \what{\pi_1(N_1)}\to \what{\pi_1(N_2)}$ of the profinite completions of the fundamental groups. Such an isomorphism induces an 
isomorphism
$ \what{H_1(N_1;\Z)}\to \what{H_1(N_2;\Z)}$ of the profinite completions of the homology groups. It is straightforward to see that this implies that 
$H_1(N_1;\Z)$ and $H_1(N_2;\Z)$ are abstractly isomorphic. But in general the isomorphism 
$ \what{H_1(N_1;\Z)}\to \what{H_1(N_2;\Z)}$  is not  induced  by an isomorphism of the homology groups.
 Since we want to compare the Thurston norm and the fibered classes of $N_1$ and $N_2$ it is convenient to assume that $ \what{H_1(N_1;\Z)}\to \what{H_1(N_2;\Z)}$ is in fact induced by an isomorphism   $ H_1(N_1;\Z)\to H_1(N_2;\Z)$.

This leads us to the following definition: we say that an isomorphism $f\colon \what{\pi_1}\to \what{\pi_2}$ between profinite completions of two groups $\pi_1$ and $\pi_2$ is 
\emph{regular}  
if the induced isomorphism $ \what{H_1(\pi_1;\Z)}\to \what{H_1(\pi_2;\Z)}$ is induced by an isomorphism $H_1(\pi_1;\Z)\to H_1(\pi_2;\Z)$. This isomorphism is then necessarily uniquely determined by $f$ and by a slight abuse of notation we also denote it by $f$.

Throughout this paper we will restrict ourselves to 3-manifolds that are aspherical and with empty or toroidal boundary. It follows from the Geometrization Theorem that a 3-manifold, with the adjectives mentioned above, is aspherical if and only if it is prime, has infinite fundamental group and is not homeomorphic to $S^1\times S^2$. 

Now we can formulate our main theorem.

\begin{theorem}\label{mainthm}
Let $N_1$ and $N_2$ be two aspherical 3-manifolds with empty or toroidal boundary. Suppose $f\colon \what{\pi_1(N_1)}\to \what{\pi_1(N_2)}$ is a regular isomorphism.  Let $\phi\in H^1(N_2;\R)$.
Then 
\[ \phi\in H^1(N_2;\R)\mbox{ is fibered}\quad \Longleftrightarrow \quad f^*\phi\in H^1(N_1;\R)\mbox{ is fibered}.\]
Furthermore
\[ x_{N_2}(\phi)=x_{N_1}(f^*\phi).\]
\end{theorem}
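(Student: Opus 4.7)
The strategy is to use twisted Alexander polynomials as a bridge between profinite completion data and the Thurston norm and fiberedness. By work of Friedl--Vidussi, combined with Agol's virtual fibering theorem and the virtual specialness that applies to all aspherical 3-manifolds with empty or toroidal boundary, for such an $N$ and $\phi\in H^1(N;\Z)$ both $x_N(\phi)$ and the fiberedness of $\phi$ are detected by the family of twisted Alexander polynomials
\[
\bigl\{\Delta^{\alpha}_{N,\phi}(t)\in\Z[t^{\pm 1}]\bigr\}_{\alpha}
\]
where $\alpha$ ranges over finite quotients $\alpha\colon \pi_1(N)\to G$. Specifically, $x_N(\phi)$ equals $\sup_\alpha \tfrac{1}{|G|}\deg\Delta^{\alpha}_{N,\phi}$, and $\phi$ is fibered iff the supremum is attained and each $\Delta^{\alpha}_{N,\phi}$ is monic under the appropriate normalization. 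Continuity and homogeneity of $x_N$ then reduce the theorem to the case $\phi\in H^1(N_2;\Z)$, and I set $\psi:=f^*\phi\in H^1(N_1;\Z)$.

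A profinite isomorphism induces a bijection between (conjugacy classes of) finite quotients, so each $\alpha_2\colon \pi_1(N_2)\to G$ corresponds via $f$ to a finite quotient $\alpha_1\colon \pi_1(N_1)\to G$, uniquely up to automorphism of $G$. Regularity of $f$ ensures that $\phi$ and $\psi$ are matched under the induced isomorphism on $H^1$. The central claim is then
\[
\Delta^{\alpha_1}_{N_1,\psi}(t)\doteq\Delta^{\alpha_2}_{N_2,\phi}(t)
\]
in $\Z[t^{\pm 1}]$ up to units, or at least that their degrees and leading coefficients agree. Summing over all finite $\alpha$ and invoking the Friedl--Vidussi detection then yields $x_{N_2}(\phi)=x_{N_1}(\psi)$ together with the equivalence of fiberedness.

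To prove this central claim I would reduce to ordinary Alexander polynomials: up to a controlled factor, $\Delta^{\alpha}_{N,\phi}$ is the Alexander polynomial $\Delta_{\widetilde N,\widetilde\phi}$ of the regular finite cover $\widetilde N\to N$ with deck group $G=\pi_1(N)/\ker\alpha$, where $\widetilde\phi$ is the pullback of $\phi$. The covers $\widetilde N_1$ and $\widetilde N_2$ corresponding to $\alpha_1,\alpha_2$ have fundamental groups sharing the same profinite completion via $f$, and the pulled-back classes $\widetilde\psi,\widetilde\phi$ correspond under the restricted regular isomorphism. The problem thus becomes: the degree and leading coefficient of the Alexander polynomial of a 3-manifold $\widetilde N$ with respect to an integral $H^1$-class $\widetilde\phi$ depend only on $(\what{\pi_1(\widetilde N)},\widetilde\phi)$. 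This is obtained by expressing $|\Delta_{\widetilde N,\widetilde\phi}(\zeta_n)|$ in terms of $|H_1(\widetilde N^{(n)};\Z)_{\mathrm{tor}}|$ for the $n$-fold cyclic covers $\widetilde N^{(n)}$ determined by $\widetilde\phi\bmod n$---all genuine profinite invariants---and recovering $\deg \Delta$ and the leading coefficient from their asymptotic growth, in the spirit of Ueki's profinite rigidity results for Alexander polynomials and Mahler-measure type arguments.

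The main obstacle is precisely this last step: the Alexander polynomial lies in $\Z[t^{\pm 1}]$, which is not a profinite object, so I must show that the invariants we actually care about---degree and monic-ness---are encoded in the tower of finite cyclic covers. A secondary subtlety is the careful matching of finite covers and of integral $H^1$-classes under $f$: here regularity is indispensable, since a bare profinite isomorphism only matches $\what{H_1}$-classes, insufficient to pair integral $\phi$-classes on the nose. Once the degree and leading-coefficient invariance are established, the Friedl--Vidussi characterization delivers both halves of the theorem simultaneously.
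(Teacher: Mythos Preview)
Your high-level strategy---twisted Alexander polynomials plus the Friedl--Vidussi detection theorems---is exactly the paper's strategy. The divergence, and the gap, is in how you propose to establish the key technical claim that the degrees of twisted Alexander polynomials are preserved by a regular profinite isomorphism.

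The paper does this directly via Serre-goodness. It works throughout with representations $\alpha\colon\pi_1(N)\to\gl(k,\F_p)$ into finite matrix groups, and proves a formula (Proposition~\ref{prop:tapdegree}) expressing $\deg\Delta^{\alpha}_{N,\phi,i}$ as a maximum over $n$ of differences of $\F_p$-dimensions of the twisted homology groups $H_*^{\alpha\otimes\phi_n}(N;\F_p[\Z_n]^k)$. These are homology groups with \emph{finite} coefficient modules, hence profinite invariants once one knows that $3$-manifold groups are good (Theorem~\ref{thm:cav}), which is where the Agol--Wise--Przytycki--Wise input enters. No passage to integral Alexander polynomials or growth asymptotics is needed.

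Your route---reduce to ordinary $\Z[t^{\pm1}]$-Alexander polynomials of finite covers and then recover degree and leading coefficient from torsion growth in cyclic covers---does not close. The Silver--Williams asymptotic $\lim_n \tfrac{1}{n}\log|\operatorname{Tor} H_1(\widetilde N^{(n)};\Z)|$ equals $\log m(\Delta)$, the logarithmic Mahler measure, \emph{not} the degree. If $\Delta$ is a product of cyclotomic polynomials then $m(\Delta)=1$ and the asymptotic vanishes regardless of $\deg\Delta$; so degree is simply not encoded in torsion growth. Ueki-type results that do recover $\Delta$ itself require, in addition, tracking the first Betti numbers of the cyclic covers (to detect cyclotomic factors), apply only in the $b_1=1$ setting, and in the end amount to reading off $\F_p$-homology dimensions---which is what the paper does from the start. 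There is a second mismatch: the Friedl--Vidussi criteria the paper invokes (Theorem~\ref{thm:fv}) are formulated over $\F_p$---fiberedness is equivalent to \emph{non-vanishing} of $\Delta^{\alpha}_{N,\phi,1}$ for all $\alpha$ into $\gl(k,\F_p)$, not to a monic condition over $\Z$---so even a complete determination of integral Alexander polynomials of covers would not plug directly into the detection theorem you need.

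In short: replace the Mahler-measure step by the goodness argument, and work over $\F_p$ throughout.
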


The proof rests on the fact that 3-manifold groups are good in the sense of Serre \cite{Se97} and that profinite completions contain enough informations on certain twisted Alexander polynomials which by \cite{FV12,FV13,FN15} determine fiberedness and the Thurston norm. Both types of results rely on the recent work of Agol \cite{Ag08,Ag13}, Przytycki--Wise \cite{PW12} and Wise \cite{Wi09,Wi12a,Wi12b}.

For the remainder of this introduction we consider the special case of knot complements. Given a knot $K\subset S^3$ we denote by $X(K)=S^3\sm \nu K$ its exterior and we denote by $\pi(K):=\pi_1(S^3\sm \nu K)$ the corresponding knot group. We say that \emph{$K$ is fibered} if the knot exterior $X(K)$ is a surface bundle over $S^1$. Furthermore, we refer to the minimal genus of a Seifert surface for $K$ as the \emph{genus $g(K)$ of $K$}.

Our main theorem for knots is the following variation on Theorem~\ref{mainthm}.

\begin{theorem}\label{mainthmknots}
If $J$ and $K$ are two knots such that the profinite completions of their  groups are isomorphic, then $J$ is fibered if and only if $K$ is fibered.
Furthermore, $g(J)=g(K)$.
\end{theorem}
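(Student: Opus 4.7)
\begin{proofc}
The plan is to reduce Theorem~\ref{mainthmknots} to Theorem~\ref{mainthm} applied to the exteriors $N_1=X(J)$ and $N_2=X(K)$. The unknot case is immediate: if $\pi(K)\cong\Z$, then $\widehat{\pi(J)}\cong\widehat{\Z}$ forces, via the abelianization together with residual finiteness of $\pi(J)$, that $\pi(J)\cong\Z$; so $J$ is also trivial and both conclusions hold. Otherwise $X(J)$ and $X(K)$ are aspherical $3$-manifolds with toroidal boundary, so the hypotheses of Theorem~\ref{mainthm} on the underlying manifolds are met.

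The isomorphism $f\colon\widehat{\pi(J)}\to\widehat{\pi(K)}$ induces an isomorphism $\widehat{\Z}\to\widehat{\Z}$ of profinite abelianizations, which is necessarily multiplication by some unit $u\in\widehat{\Z}^*$. The iso $f$ is regular exactly when $u=\pm 1$. To deduce Theorem~\ref{mainthmknots} from Theorem~\ref{mainthm} in general, one must either modify $f$ to a regular isomorphism (for instance by post-composing with an automorphism of $\widehat{\pi(K)}$ that induces multiplication by $u^{-1}$ on $\widehat{H_1}$) or sidestep the regularity hypothesis by exploiting that $H^1(X(K);\R)\cong\R$ is one-dimensional so that the Thurston norm and the fiberedness condition at the generator depend only on the ray through $\phi$, and not on the scale. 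This reduction step is the main technical point of the proof and constitutes the principal obstacle.

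Once a regular isomorphism is in hand, apply Theorem~\ref{mainthm} with $\phi=\phi_0^K$ a generator of $H^1(X(K);\Z)\cong\Z$ (unique up to sign). Regularity implies $f^*\phi_0^K=\pm\phi_0^J$, so
\[
  x_{X(J)}(\phi_0^J)=x_{X(K)}(\phi_0^K),
\]
and $\phi_0^J$ is a fibered class iff $\phi_0^K$ is. Since a knot is fibered iff the generator of $H^1$ of its exterior is fibered, this gives ``$J$ fibered $\iff$ $K$ fibered.'' Finally, $x_{X(K)}(\phi_0^K)=\max\{2g(K)-1,0\}$ is achieved on a minimal-genus Seifert surface, so the Thurston norm equality translates directly into $g(J)=g(K)$.
\end{proofc}
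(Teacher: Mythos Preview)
Your proposal correctly identifies the obstacle---an arbitrary isomorphism $f\colon\widehat{\pi(J)}\to\widehat{\pi(K)}$ need not be regular---but it does not overcome it. Neither of the two routes you sketch actually works. For the first, there is no reason to expect an automorphism of $\widehat{\pi(K)}$ inducing multiplication by a prescribed unit $u^{-1}\in\widehat{\Z}^\times$ on the abelianization; nothing in the paper (or elsewhere) provides such automorphisms, and for a general knot group this is an open problem. For the second, one-dimensionality of $H^1$ does not let you invoke Theorem~\ref{mainthm}: without regularity the map $f^*$ on integral (or real) cohomology is simply not defined, so there is no class $f^*\phi$ to plug into the conclusion. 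Saying ``once a regular isomorphism is in hand'' is precisely the step that is missing.

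The paper does \emph{not} reduce to Theorem~\ref{mainthm}. Instead it reproves the relevant equalities of degrees of twisted Alexander polynomials directly, using the extra hypothesis $H_1\cong\Z$ in place of regularity. The point is Lemma~\ref{lem:samecyclich1}: since $H_1(N;\Z)\cong\Z$, for each $n$ there is up to sign a unique epimorphism $\pi_1(N)\to\Z_n$, and the sign change induces an isomorphism of the twisted chain complexes, so the dimensions $\dim_{\F_p} H_i^{\alpha\otimes\phi_n}(N;\F_p[\Z_n]^k)$ are intrinsic invariants of $(N,\alpha,n)$. Combined with goodness (Proposition~\ref{prop:sametwistedhomologies}) this gives
$\dim_{\F_p} H_i^{\alpha\otimes\phi_n}(X(J);\F_p[\Z_n]^k)=\dim_{\F_p} H_i^{(\alpha\circ f)\otimes\psi_n}(X(K);\F_p[\Z_n]^k)$
for generators $\phi,\psi$ of the respective $H^1$'s, without ever needing $f^*\phi=\pm\psi$. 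Proposition~\ref{prop:tapdegree} then yields equality of the degrees $\deg\Delta^{\alpha}_{\cdot,\cdot,i}$, and the argument of Theorem~\ref{mainthm} finishes the proof. You should replace the ``reduce to regular'' paragraph with this argument.
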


This theorem is not an immediate corollary to Theorem~\ref{mainthm} since we do not assume that the isomorphism of the profinite completions is regular.

It is also natural to ask, to what degree does the profinite completion of knot groups distinguish knots?
Here, and throughout the paper we say that two knots $J$ and $K$ are \emph{equivalent} if there exists a diffeomorphism $h$ of $S^3$ with $h(J)=K$.
Evidently the profinite completion of knot groups can only determine knots which are already determined by their groups.
Whitten \cite{Wh87} showed that prime knots are determined by their groups. 
The following question thus arises.

\begin{question}\label{question:primeknots}
Let $J$ and $K$ be two prime knots with $\what{\pi(J)}\cong \what{\pi(K)}$. Does it follow that $J$ and $K$ are equivalent?
\end{question}

It is straightforward to see that the profinite completion detects the unknot. For completeness' sake we provide the proof in Lemma~\ref{lem:profinitetrivial}.  Also, in many real-life situations it can be quite easy to distinguish two given knots by their finite quotients.

Using Theorem~\ref{mainthmknots} we quickly obtain the following corollary.

\begin{corollary}\label{cor:distinguish34-intro} 
Let $J$ be the trefoil or the Figure-8 knot. If $K$ is a knot with $\what{\pi(J)}\cong \what{\pi(K)}$, then $J$ and $K$ are equivalent.
\end{corollary}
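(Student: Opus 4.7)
The plan is to combine Theorem~\ref{mainthmknots} with the classical classification of fibered knots of genus one, and then to separate the remaining possibilities via the $2$-fold cyclic cover of the knot exterior. By Theorem~\ref{mainthmknots}, the hypothesis $\what{\pi(J)}\cong \what{\pi(K)}$ forces $K$ to be fibered with $g(K)=g(J)=1$. Classically, the fibered genus-one knots in $S^3$ are precisely the two trefoils and the figure-8 knot. Since the two trefoils are interchanged by mirror reflection, and the paper's notion of equivalence allows any self-diffeomorphism of $S^3$, it suffices to show that $J$ and $K$ are of the same type (both trefoils or both figure-8) using only the profinite completion.

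To distinguish the two types, I would pass to the $2$-fold cyclic cover $\wti X(K)$ of the knot exterior $X(K)$. Since $\pi(K)^{ab}\cong \Z$ has a unique subgroup of index $2$, the knot group $\pi(K)$ has a unique index-$2$ subgroup, namely $\pi_1(\wti X(K))$. Similarly $\what{\pi(K)}^{ab}\cong\what{\Z}$ has a unique open subgroup of index $2$, which must be the closure of $\pi_1(\wti X(K))$. Hence any isomorphism $\what{\pi(J)}\to \what{\pi(K)}$ identifies these closures. Using that $3$-manifold groups are good in the sense of Serre (as recalled after Theorem~\ref{mainthm}), each such closure is canonically identified with the profinite completion of the corresponding $\pi_1$, yielding $\what{\pi_1(\wti X(J))}\cong \what{\pi_1(\wti X(K))}$; abelianizing then gives $\what{H_1(\wti X(J);\Z)}\cong \what{H_1(\wti X(K);\Z)}$.

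A standard computation yields that the torsion subgroup of $H_1(\wti X(K);\Z)$ has order $|\Delta_K(-1)|$, which equals $3$ for the trefoil and $5$ for the figure-8. Since the torsion subgroup of the profinite completion of a finitely generated abelian group coincides with its honest torsion subgroup, this forces $J$ and $K$ to be of the same type, and hence equivalent. The step I expect to require the most care is the goodness argument: one must verify that the closure of $\pi_1(\wti X(K))$ in $\what{\pi(K)}$ really equals $\what{\pi_1(\wti X(K))}$, so that the subsequent abelianization makes sense; this follows from the goodness cited in the paper, but it is worth making explicit.
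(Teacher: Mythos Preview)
Your proof is correct and follows the same overall strategy as the paper: use Theorem~\ref{mainthmknots} to conclude that $K$ is fibered of genus one, invoke the classification of such knots (the paper cites \cite[Proposition~5.14]{BZ85}), and then distinguish the trefoil from the figure-8 via the homology of a cyclic cover. The paper phrases the last step through Proposition~\ref{prop:samealex}(1) (the trefoil's Alexander polynomial has a sixth root of unity as a zero while the figure-8's does not), but that proposition is itself proved exactly via the cyclic-cover argument you outline; your computation with $|\Delta_K(-1)|$ is just the $n=2$ instance of Lemma~\ref{lem:sameh1xn} combined with Fox's formula. One remark: your worry about goodness is misplaced. That the closure of a finite-index subgroup $H\leq\pi$ in $\what{\pi}$ is canonically $\what{H}$ is a general fact about profinite completions of finitely generated groups and does not require goodness; the paper uses precisely this in Lemma~\ref{lem:sameh1xn} without invoking it.
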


More generally we prove the following result for torus knots.

\begin{theorem}\label{thm:torusknots}
Let $J$ be a torus knot. If $K$ is a knot with $\what{\pi(J)}\cong \what{\pi(K)}$, then $J$ and $K$ are equivalent.
\end{theorem}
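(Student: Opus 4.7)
The plan is to combine Theorem~\ref{mainthmknots} with a profinite detection of the Seifert fibred structure of $X(J)$, followed by an Alexander-polynomial argument to pin down the torus knot parameters. Write $J=T(p,q)$; if $\min(p,q)=1$ then $J$ is the unknot and the result is Lemma~\ref{lem:profinitetrivial}, so assume $p,q\ge 2$ are coprime. By Theorem~\ref{mainthmknots}, $K$ is fibered with $g(K)=g(J)=(p-1)(q-1)/2\ge 1$, so in particular $K$ is nontrivial.

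Next I would argue that $X(K)$ is Seifert fibered. The Seifert fibred structure on $X(J)$ is witnessed by the nontrivial centre of $\pi(J)=\langle a,b\mid a^p=b^q\rangle$, namely the infinite cyclic subgroup generated by $z:=a^p$, whose class in $H_1(X(J);\Z)\cong\Z$ is $pq\ne 0$. By residual finiteness of $\pi(J)$, the image of $z$ in $\what{\pi(J)}$ is a nontrivial central element, so the same is true of $f(z)\in\what{\pi(K)}$. Using the goodness of 3-manifold groups in the sense of Serre (already invoked in the proof of Theorem~\ref{mainthm}) together with results of Wilton--Zalesskii detecting Seifert fibred pieces of the JSJ decomposition, I would conclude that $X(K)$ has a Seifert fibred JSJ piece containing its boundary torus; the genus constraint together with the fact that $K$ is fibered then forces the JSJ decomposition to be trivial. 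Hence $X(K)$ itself is Seifert fibered, and since $K$ is a nontrivial knot, $K$ is a torus knot $T(p',q')$ with $p',q'\ge 2$ coprime.

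Finally, to identify $(p',q')$ I would recover the Alexander polynomial from $\what{\pi(K)}$: for each $n\ge 1$, the $n$-fold cyclic cover of $X(K)$ corresponds to the kernel of $\pi(K)\onto\Z\onto\Z/n$, a characteristic finite-index subgroup whose abelianization is determined as an abstract abelian group by $\what{\pi(K)}$. Its torsion order $\prod_{\zeta^n=1,\,\zeta\ne 1}|\Delta_K(\zeta)|$, read off for every $n$, recovers the set of roots of unity at which $\Delta_K$ vanishes and their multiplicities. A direct inspection of $\Delta_{T(m,n)}(t)=(t^{mn}-1)(t-1)/\bigl((t^m-1)(t^n-1)\bigr)$ shows that this data determines the unordered pair $\{m,n\}$ of coprime integers, hence $\{p',q'\}=\{p,q\}$ and $K$ is equivalent to $J$.

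The main obstacle is the middle step, namely the passage from a central element in $\what{\pi(K)}$ to the Seifert fibred structure on $X(K)$. Central elements of a profinite completion need not lie in the dense subgroup, so one must invoke the strong geometric input of Agol, Wise, and Przytycki--Wise already used in the proof of Theorem~\ref{mainthm}, together with Wilton--Zalesskii--style detection of Seifert fibred JSJ pieces, to transport the explicit centre of $\pi(J)$ into honest geometric information about $X(K)$.
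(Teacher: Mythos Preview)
Your final step, recovering $\{p,q\}$ from the Alexander data of cyclic covers once you know $K$ is a torus knot, is correct and in fact coincides with the paper's second argument: the genus gives $(p-1)(q-1)=(p'-1)(q'-1)$, while the first cyclic cover with Betti number $>1$ occurs at level $pq$ (respectively $p'q'$), so $pq=p'q'$ and hence $\{p,q\}=\{p',q'\}$.

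The gap is in the middle step, and it is more than the vagueness you flag at the end. The specific assertion ``the genus constraint together with the fact that $K$ is fibered then forces the JSJ decomposition to be trivial'' is false: fibered satellite knots exist in every genus $\ge 1$ (iterated torus knots, or more generally cables of fibered knots, are fibered and have nontrivial JSJ, and the outermost piece is a Seifert fibered cable space containing $\partial X(K)$). So even if you succeed in showing that $X(K)$ has a Seifert fibered JSJ piece at the boundary, nothing you have written excludes the satellite case. The Wilton--Zalesskii result you allude to concerns closed manifolds and does not directly hand you triviality of the JSJ of a knot exterior.

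The paper handles this step quite differently. To exclude $K$ hyperbolic it invokes Long--Reid \cite{LR98}: hyperbolic knot groups are residually finite simple, hence $\pi(K)$ (and therefore $\pi(J)$) would admit a cofinal tower of finite quotients that are simple, impossible for a group with infinite cyclic centre. To exclude $K$ satellite it uses that the profinite topology on $\pi(K)$ is efficient for the JSJ graph of groups \cite{WZ10}, so $\what{\pi(K)}$ inherits a profinite graph-of-groups decomposition. The procyclic centre $\what{Z}\cong\what{\Z}$ of $\what{\pi(J)}\cong\what{\pi(K)}$ is then shown (via \cite{ZM88}) to lie in every edge group $\what{\Z\times\Z}$ and every vertex group; the hyperbolic case already treated forces all pieces to be Seifert, and a $p$-Sylow comparison shows $\what{\Z}_{(p)}$ cannot sit with finite index in $\what{\Z}_{(p)}\times\what{\Z}_{(p)}$, a contradiction. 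This is the substantive content you are missing; neither goodness nor the fiberedness/genus information from Theorem~\ref{mainthmknots} replaces it.
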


Two knots in the $3$-sphere are said commensurable if their exteriors have homeomorphic finite-sheeted covers. If these homeomorphic finite-sheeted covers are cyclic covers, then the knots  
are said cyclically commensurable. The following result shows that cyclically commensurable hyperbolic knots are distinguished by their finite quotients when their Alexander polynomial are not a product of cyclotomic polynomials.

\begin{theorem}\label{thm:commensurable} Let $K_1$ and $K_2$ be two cyclically commensurable hyperbolic knots. If the Alexander polynomial of $K_1$ has at least one zero which is not a root of unity and $\what{\pi(K_1)}\cong \what{\pi(K_2)}$, then $K_1$ and  $K_2$ are equivalent.
\end{theorem}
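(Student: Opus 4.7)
The plan is to combine Theorem~\ref{mainthmknots} with the fact that the profinite completion of a knot group recovers the classical Alexander polynomial, and with the structural classification of cyclically commensurable hyperbolic knots due to Boileau--Boyer--Cebanu--Walsh.

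First, I would apply Theorem~\ref{mainthmknots} to the profinite isomorphism $\what{\pi(K_1)}\cong \what{\pi(K_2)}$, which immediately gives $g(K_1)=g(K_2)$ and matches fiberedness. Next, I would extract from the profinite isomorphism that the ordinary Alexander polynomials $\Delta_{K_1}$ and $\Delta_{K_2}$ agree in $\Z[t^{\pm 1}]$ up to units. Indeed, for each $n$ the $n$-fold cyclic cover of $X(K_i)$ corresponds to the kernel of the composition $\pi(K_i)\onto \Z\onto \Z/n$, which is a characteristic finite-index subgroup; hence the $\Z[\Z/n]$-module $H_1$ of this cover is recovered from $\what{\pi(K_i)}$, and these modules for all $n$ collectively determine $\Delta_{K_i}$ up to units. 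In particular $\Delta_{K_2}$ likewise has a zero which is not a root of unity.

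Second, I would invoke the structural classification of Boileau--Boyer--Cebanu--Walsh for cyclically commensurable hyperbolic knots. Their theorem says that two inequivalent cyclically commensurable hyperbolic knots $K_1,K_2$ must both arise as distinct cyclic lifts of a common primitive knot in a lens space, coming from the two cyclic quotients of a shared arithmetic cover; in this configuration both $K_i$ are periodic and fibered, and the induced symmetry structure forces $\Delta_{K_i}$ to be a product of cyclotomic polynomials. Combined with the previous step, this product-of-cyclotomics conclusion directly contradicts the hypothesis that $\Delta_{K_1}$ has a zero which is not a root of unity, and therefore $K_1$ and $K_2$ must be equivalent.

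The main obstacle will be making the cyclotomic conclusion of the BBCW theorem precise. If their statement is formulated purely geometrically (as an orbifold relation between the knot exteriors), then a supplementary argument is needed to pass from this geometry to a Laurent polynomial statement: for instance, by analyzing the first Alexander module of $K_1$ as a module over the group ring of the finite cyclic deck group acting on the common cover and showing that it becomes torsion over $\Z[t^{\pm 1}]/(t^n-1)$ for some $n$, so that every root of $\Delta_{K_1}$ is an $n$-th root of unity. A second, more minor, point to verify carefully is that Theorem~\ref{mainthmknots} is genuinely applicable here: the profinite isomorphism need not be regular, but the proof of that theorem already accounts for this through the automorphism ambiguity of $\what{\Z}$, so no extra work should be required.
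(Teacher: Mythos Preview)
Your proposal contains a genuine gap at the decisive step. The claim that the BBCW structure theorem forces $\Delta_{K_i}$ to be a product of cyclotomic polynomials is false. The paper's own introduction supplies a counterexample: the $(-2,3,7)$ pretzel knot is hyperbolic, admits two lens space surgeries, and is therefore cyclically commensurable with other hyperbolic knots in $S^3$; yet its Alexander polynomial is the Lehmer polynomial, which has a real root $>1$. Periodicity of a knot constrains $\Delta_K$ via Murasugi's congruence (roughly $\Delta_K(t)\doteq\prod_j \Delta_{\bar K}(\zeta^j t)$ for the quotient knot $\bar K$), but it does not force the roots to be roots of unity. Your suggested fallback, showing that the Alexander module becomes torsion over $\Z[t^{\pm1}]/(t^n-1)$ for some $n$, would amount to exactly the cyclotomic claim and fails for the same reason.

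A secondary issue: your assertion that the $\Z[\Z/n]$-modules $H_1(X(K_i)_n;\Z)$ for all $n$ determine $\Delta_{K_i}$ up to units is stronger than what the paper establishes (Proposition~\ref{prop:samealex}, which relies on Fried's theorem, only yields $\Delta_J=\pm\Delta_K$ under the extra hypothesis that neither polynomial has a root of unity as a zero), and the module structure itself is only recovered up to a twist by a unit of $\Z/n$ coming from $\Aut(\what{\Z})$.

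The paper's proof takes a completely different route. From BBCW one extracts (Proposition~\ref{prop:commensurable}) a manifold $Y$ with $H_1(Y;\Z)\cong\Z$ and \emph{coprime} integers $p_1,p_2$ such that $X(K_i)$ is the $p_i$-fold cyclic cover of $Y$. The Silver--Williams formula $\lim_n \tfrac{1}{n}\log t(Z_n)=\log m(\Delta_Z)$ then gives $\log m(\Delta_{K_i})=p_i\log m(\Delta_Y)$. The profinite isomorphism forces $t(X(K_1)_k)=t(X(K_2)_k)$ for all $k$, hence $m(\Delta_{K_1})=m(\Delta_{K_2})$. If $\Delta_{K_1}$ has a zero that is not a root of unity, Kronecker's theorem gives $m(\Delta_{K_1})>1$, so $\log m(\Delta_Y)\ne 0$ and $p_1=p_2$, contradicting coprimality. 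The invariant doing the work is the Mahler measure, not the polynomial itself.
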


A hyperbolic knot $K$ has no hidden symmetries if the commensurator of $\pi(K)$ in $PSL(2, \mathbb{C})$ coincides with its normalizer. The following corollary is a straightforward consequence of Theorem \ref{thm:commensurable} above  and the fact that two hyperbolic knots in $S^3$ without hidden symmetries are commensurable iff they are cyclically commensurable, see \cite[Thm 1.4]{BBCW12}.

\begin{corollary} Let $K_1$ and $K_2$ be two commensurable hyperbolic knots in $S^3$ without hidden symmetries. If the Alexander polynomial of $K_1$ has at least one zero which is not a root of unity and $\what{\pi(K_1)}\cong \what{\pi(K_2)}$, then $K_1$ and $K_2$ are equivalent.
\end{corollary}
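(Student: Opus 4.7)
The plan is to derive this directly from Theorem \ref{thm:commensurable} after upgrading the commensurability hypothesis to \emph{cyclic} commensurability by invoking the geometric classification result from \cite{BBCW12} cited just before the statement. So there are really only two inputs, and the proof amounts to threading the hypotheses through them in the right order.

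First, I would apply \cite[Thm 1.4]{BBCW12}: two hyperbolic knots in $S^3$ without hidden symmetries are commensurable if and only if they are cyclically commensurable. Since $K_1$ and $K_2$ are given to be commensurable hyperbolic knots without hidden symmetries (the latter being exactly the condition recalled immediately before the corollary, namely that the commensurator of $\pi(K_i)$ in $\PSL(2,\C)$ equals its normalizer), this upgrade is immediate and yields that $K_1$ and $K_2$ are cyclically commensurable.

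With cyclic commensurability in hand, all the hypotheses of Theorem \ref{thm:commensurable} are now in place: $K_1$ and $K_2$ are cyclically commensurable hyperbolic knots, the Alexander polynomial of $K_1$ has at least one zero that is not a root of unity, and $\what{\pi(K_1)}\cong \what{\pi(K_2)}$. Applying Theorem \ref{thm:commensurable} then concludes that $K_1$ and $K_2$ are equivalent. There is no real obstacle here, since the authors themselves advertise the statement as a straightforward consequence; the entire mathematical content is packed into Theorem \ref{thm:commensurable} and into the characterization of commensurability via cyclic commensurability in the absence of hidden symmetries from \cite{BBCW12}. The only point worth verifying is that the "no hidden symmetries" hypothesis is indeed precisely the one needed to invoke \cite[Thm 1.4]{BBCW12}, which is clear from the definition recalled in the paper.
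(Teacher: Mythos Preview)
Your proposal is correct and follows exactly the approach the paper indicates: upgrade commensurability to cyclic commensurability via \cite[Thm 1.4]{BBCW12} using the no-hidden-symmetries hypothesis, then apply Theorem~\ref{thm:commensurable}. There is nothing to add.
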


\begin{remark}

\noindent {\bf 1.} Theorem \ref{thm:commensurable} applies in particular to the  the pretzel knot $K$ of type $(−2, 3, 7)$. It is known as the Fintushel-Stern knot $K$and  admits two lens space surgeries. So its exterior  is cyclically covered by the exteriors of two distinct knots. It is a fibered knot with Alexander polynomial the Lehmer polynomial $\Delta_K =  1 + t - t^3 - t^4 - t^5 - t^6 - t^7 + t^9  + t^{10}$ which is the integral polynomial of smallest known Mahler measure (see Section \ref{section:commensurable} for the definition) and which has two real roots, one being the Salem number $1.17628....$ .

\noindent {\bf 2.} Currently, the only hyperbolic knots known to admit hidden symmetries are the figure-8 and the two dodecahedral knots of Aitchison and Rubinstein, see \cite{AR92}.
By \cite{Re91} the figure-8 is the only  knot in $S^3$  with arithmetic complement, hence it is the unique knot in its commensurability class. 
The two dodecahedral knots are commensurable, but one is fibered and the other one not, so it follows from Theorem \ref{mainthmknots} that their groups cannot have the same profinite completion.
\end{remark}

The paper is structured as follows.
In Section~\ref{section:prelim} we recall the definition and some basic facts on  profinite completions. In Section~\ref{section:tap} we recall the definition of twisted homology and cohomology groups and of twisted Alexander polynomials of $3$-manifolds. In Section~\ref{section:mainthm} we will relate profinite completions to the degrees of certain twisted Alexander polynomials.
This will then allow us to prove Theorem~\ref{mainthm} and a slightly stronger version of Theorem~\ref{mainthmknots}. In Section~\ref{section:torusknots} we prove Theorem \ref{thm:torusknots}   about torus knots and in Section \ref{section:commensurable} we prove Theorem \ref{thm:commensurable} about cyclically commensurable hyperbolic knots.

There are some overlaps between the methods and the results of this article and of a forthcoming article by Martin Bridson and Alan Reid.

\subsection*{Convention.} Unless it says specifically otherwise, all groups are assumed to be finitely generated, all modules are assumed to be finitely generated, all manifolds are assumed to be orientable, connected and compact, and all 3-manifolds are assumed to have empty or toroidal boundary. 
Finally, $p$ will always be a prime and $\F_p$ denotes the field with $p$ coefficients.

\subsection*{Acknowledgment.} We wish to thank  Baskar Balasubramanyam, Marc Lackenby, Jacob Rasmussen, Alan Reid, Henry Wilton and Pavel Zalesskii for helpful conversations. The second author is grateful for the hospitality at  Keble College, IISER Pune, Universit\'e de Toulouse and Aix-Marseille Universit\'e. Finally the second author was supported by the SFB 1085 `Higher Invariants' at the University of Regensburg, funded by the Deutsche Forschungsgemeinschaft (DFG).

\section{The profinite completion of a group}\label{section:prelim}
In this section we recall several basic properties of profinite completions. Throughout this section we refer to \cite{RZ10} and \cite{Wi98} for details.

\subsection{The definition of the profinite completion of a group}
Given a group $\pi$  we consider the inverse system
$\{\pi/\G\}_{\G}$ where $\G$ runs over all finite index normal subgroups of $\pi$. 
The  profinite completion $\what{\pi}$ of $\pi$ is then defined as the inverse limit
of this system, i.e.\
\[ \what{\pi}=\underset{\longleftarrow}{\lim} \,\pi/\G. \]
Note that the natural map $\pi\to \what{\pi}$ is injective if and only if  $\pi$ is residually finite. It follows from  \cite{He87} and the proof of the Geometrization Conjecture that  fundamental groups of 3-manifolds are residually finite.

When $\pi$ is finitely generated, a deep result in \cite{NS07} states that every finite index subgroup of $\what{\pi}$ is open. It means that $\what{\what{\pi}} = \what{\pi}$. Then the following is a consequence of the definitions, see \cite[Proposition~3.2.2]{RZ10} for details.

\begin{lemma}\label{lem:samefinitequotients}
Let $\pi$ be a finitely generated group. Then for any finite group $G$ the map $\pi\to \what{\pi}$ induces a bijection
$\hom(\what{\pi},G)\to \hom(\pi,G)$.
\end{lemma}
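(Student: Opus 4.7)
The plan is to prove bijectivity by checking surjectivity and injectivity separately; the nontrivial ingredient is the Nikolov--Segal theorem cited just above the lemma.

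For surjectivity, I would take an arbitrary homomorphism $\psi\co \pi\to G$. Its kernel $\Gamma=\ker(\psi)$ is a normal subgroup of finite index in $\pi$, so $\pi/\Gamma$ appears in the inverse system $\{\pi/\G\}_\G$ defining $\what{\pi}$. The inverse limit therefore comes equipped with a canonical continuous projection $\what{\pi}\to \pi/\Gamma$ that factors the quotient $\pi\to \pi/\Gamma$; composing with the injection $\pi/\Gamma \hookrightarrow G$ induced by $\psi$ produces a homomorphism $\what{\psi}\co \what{\pi}\to G$ whose pullback along $\pi\to\what{\pi}$ is $\psi$.

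For injectivity, suppose $\phi_1,\phi_2\co \what{\pi}\to G$ agree after pullback to $\pi$. Since $G$ is finite, the kernels of $\phi_1$ and $\phi_2$ are finite-index subgroups of the finitely generated profinite group $\what{\pi}$, hence open by \cite{NS07}. Thus $\phi_1$ and $\phi_2$ are continuous for the profinite topology on $\what{\pi}$ and the discrete topology on $G$. The coincidence set
\[
\{x\in \what{\pi} : \phi_1(x)=\phi_2(x)\}
\]
is the preimage of the diagonal under $(\phi_1,\phi_2)\co \what{\pi}\to G\times G$, so it is closed. It contains the image of $\pi$, which is dense in $\what{\pi}$ by construction of the profinite completion, and consequently equals all of $\what{\pi}$. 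Therefore $\phi_1=\phi_2$.

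The main subtle point, and the reason the hypothesis that $\pi$ be finitely generated is essential, is precisely the appeal to the Nikolov--Segal theorem: without it an abstract homomorphism from $\what{\pi}$ to a finite group need not have open kernel, continuity would fail, and the density of (the image of) $\pi$ in $\what{\pi}$ would no longer force $\phi_1$ and $\phi_2$ to agree on all of $\what{\pi}$.
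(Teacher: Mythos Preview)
Your proof is correct and follows exactly the approach the paper indicates: the paper does not give a proof of its own but simply remarks that the lemma ``is a consequence of the definitions'' together with \cite{NS07} and \cite[Proposition~3.2.2]{RZ10}, which is precisely the universal property argument you give for surjectivity plus the Nikolov--Segal continuity argument you give for injectivity. One tiny remark: in the injectivity step it would be slightly more precise to say that $\what{\pi}$ is \emph{topologically} finitely generated (because the image of $\pi$ is dense), since that is the hypothesis under which the Nikolov--Segal theorem applies.
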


\subsection{Groups with isomorphic profinite completions}


In this section $A$ and $B$ are finitely generated groups.
A group homomorphism $\varphi\colon A\to B$ induces a homomorphism $\what{\varphi}\colon \what{A}\to\what{B}$.
Evidently, if $\varphi$ is an isomorphism, then so is $\what{\varphi}$. 
On the other hand,  an isomorphism $\phi \co \what{A}\to \what{B}$  is not necessarily induced by a homomorphism $\varphi\co A\to B$. There are even isomorphisms $\what{\Z}\to \what{\Z}$ that are not induced by an automorphism of $\Z$. 
Moreover, it follows from \cite{NS07} that any homomorphism $\phi \colon \what{A}\to\what{B}$ is in fact continuous.

If $f\colon \what{A}\to\what{B}$ is an isomorphism, then it follows from Lemma~\ref{lem:samefinitequotients}
that for any finite group $G$ we have bijections
\[  \hom(B,G)\leftarrow   \hom(\what{B},G) \xrightarrow{f^*} \hom(\what{A},G)\to \hom(A,G).\]
Given $\b\in \hom(B,G)$ we will, by a slight abuse of notation, denote by $\b\circ f$ the resulting homomorphism from $A$ to $G$.
In particular  given a representation $\b\colon B\to \gl(k,\F_p)$ we obtain an induced representation 
$A\to \gl(k,\F_p)$ which we  denote by $\b\circ f$. 
\medskip

Given a group $\pi$ we denote by $Q(\pi)$ the set of finite quotients of $\pi$.
We just showed that  finitely generated groups $A$ and $B$ with isomorphic profinite completions have the same finite quotients,
i.e.\ the sets $Q(A)$ and $Q(B)$ are the same.
Somewhat surprisingly the converse also holds, more precisely, by \cite[Corollary~3.2.8]{RZ10} the following lemma holds.

\begin{lemma}\label{lem:finitequotients}
Two finitely generated groups $A$ and $B$ have isomorphic profinite completions if and only if $Q(A)=Q(B)$.
\end{lemma}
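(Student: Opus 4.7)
The plan is to treat the two directions separately. The forward implication is immediate from Lemma~\ref{lem:samefinitequotients}: an isomorphism $f\colon \widehat{A} \to \widehat{B}$ induces, for every finite group $G$, a bijection
\[
\hom(A,G) \;\cong\; \hom(\widehat{A},G) \;\xrightarrow{\;f^*\;}\; \hom(\widehat{B},G) \;\cong\; \hom(B,G),
\]
which restricts to a bijection between epimorphisms since surjectivity is detected by the image. In particular a finite group $G$ is a quotient of $A$ iff it is a quotient of $B$, so $Q(A) = Q(B)$.

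For the reverse implication, assume $Q(A) = Q(B)$. The strategy is to realize $\widehat{A} \cong \widehat{B}$ as the limit of a compatible family of finite-level isomorphisms, assembled by a compactness argument. The first step is to upgrade the set equality $Q(A) = Q(B)$ to the cardinality equality $|\mathrm{Epi}(A,G)| = |\mathrm{Epi}(B,G)|$ for every finite $G$; finite generation of $A$ and $B$ ensures both counts are finite. Using the identity $|\hom(A,G)| = \sum_{H \leq G} |\mathrm{Epi}(A,H)|$ and M\"obius inversion on the subgroup lattice of $G$, this reduces to comparing $|\hom(A,G)|$ with $|\hom(B,G)|$, which one establishes by induction on $|G|$, exploiting that the vanishing of $|\mathrm{Epi}(\cdot,H)|$ is controlled exactly by $Q$ and that the total homomorphism count is bounded above by $|G|^{d(A)}$ for any generating set of size $d(A)$.

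Once these epimorphism counts agree, I would invoke K\"onig's lemma. Choose a cofinal descending chain $N_1 \supseteq N_2 \supseteq \cdots$ of open characteristic subgroups of $\widehat{A}$ with $\bigcap_i N_i = 1$, and for each $i$ let $S_i$ be the finite set of pairs $(M_i,\psi_i)$ with $M_i$ an open characteristic subgroup of $\widehat{B}$ and $\psi_i\colon \widehat{A}/N_i \xrightarrow{\sim} \widehat{B}/M_i$ an isomorphism. The cardinality matching ensures each $S_i$ is nonempty, and passing to the quotient mod $N_i$ on the source (with the forced choice of $M_i$ on the target) makes $\{S_i\}$ into an inverse system of nonempty finite sets. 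Compactness yields $\varprojlim S_i \neq \varnothing$, and any element assembles to a continuous isomorphism $\widehat{A} \to \widehat{B}$.

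The main obstacle is the first step: promoting the set-level statement $Q(A) = Q(B)$ to the cardinality-level statement $|\mathrm{Epi}(A,G)| = |\mathrm{Epi}(B,G)|$. Knowing only that the same isomorphism classes of finite groups appear on each side does not obviously control how many normal subgroups realize each such quotient, and the induction on $|G|$ together with the finite-generation bound is what makes the comparison possible. Once this step is secured, the K\"onig's lemma assembly is essentially formal; this is the classical Dixon--Formanek--Poland--Ribes scheme recorded in \cite[Corollary~3.2.8]{RZ10}.
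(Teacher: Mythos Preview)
The paper itself does not prove this lemma: it simply records the statement and cites \cite[Corollary~3.2.8]{RZ10}. Your forward direction is fine and matches what the paper already observes just before the lemma. Your overall architecture for the converse---produce isomorphisms on finite levels and assemble them by a compactness argument---is also the standard one.

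The gap is in your first step of the converse. Your induction on $|G|$ does not close. Assuming $|\hom(A,H)|=|\hom(B,H)|$ for all $H$ with $|H|<|G|$ gives, via M\"obius inversion, $|\mathrm{Epi}(A,H)|=|\mathrm{Epi}(B,H)|$ for every proper subgroup $H<G$. Feeding this back into $|\hom(A,G)|=\sum_{H\leq G}|\mathrm{Epi}(A,H)|$ reduces the inductive step to proving $|\mathrm{Epi}(A,G)|=|\mathrm{Epi}(B,G)|$. But the only tool you invoke at this point is that $Q(A)=Q(B)$ controls the \emph{vanishing} of these counts; it does not control their value when both are positive. The bound $|\hom(A,G)|\leq |G|^{d(A)}$ only guarantees finiteness so that the M\"obius inversion makes sense; it does not compare the two sides, and in any case $d(A)$ need not equal $d(B)$.

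The standard remedy, which is essentially what is carried out in \cite{RZ10}, bypasses the epimorphism count entirely. For each $n$ let $M_n(A)$ denote the intersection of all normal subgroups of $A$ of index at most $n$. Then $A/M_n(A)$ is the unique maximal finite quotient of $A$ that is a subdirect product of groups of order at most $n$; this description depends only on $Q(A)$, so $Q(A)=Q(B)$ forces $A/M_n(A)\cong B/M_n(B)$ directly. Your K\"onig's lemma step then runs cleanly with the specific cofinal system $N_n=\overline{M_n(A)}$ (and $M_n=\overline{M_n(B)}$ on the other side), for which the finite-level isomorphisms are now in hand. With your unspecified ``cofinal chain of open characteristic subgroups'' the nonemptiness of $S_i$ is not automatic either; one really wants these particular subgroups.
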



\section{Twisted Alexander polynomials of knots} \label{section:tap}

\subsection{Definition of twisted homology and cohomology groups}

Let $X$ be a connected CW-complex. We write $\pi=\pi_1(X)$
and we denote by $p\colon \wti{X}\to X$ the universal covering of $X$.  Note that $\pi$ acts on $\wti{X}$ on the left via deck transformations. We can thus view $C_*(\wti{X})$ as a left $\Z[\pi]$-module.
Let $R$ be a commutative ring and let $V$ be an $R$-module. Let $\a\colon \pi\to \aut_R(V)$ be a representation. We henceforth view $V$ as a left $\Z[\pi]$-module.
Given any $i$ we  refer to 
\[ H^i_\a(X;V):=H_i\big(\hom_{\Z[\pi]}\big(C_*(\wti{X}),V\big)\big)\]
as the \emph{$i$-th twisted cohomology of $(X,\a)$}. 

 Using the standard involution $g\mapsto g^{-1}$ we can turn the $\Z[\pi]$-left module  $C_*(\wti{X})$ into a right $\Z[\pi]$-module.
 Given any $i$ we then refer to 
 \[ H_i^\a(X;V):=H_i\big(C_*(\wti{X})\otimes_{\Z[\pi]}V\big)\]
 as the \emph{$i$-th twisted homology of $(X,\a)$}.

\subsection{Orders of modules}
Let $\F$ be a field and let  $H$ be a finitely generated $\ft$-module. Since $\ft$ is a PID there exists an isomorphism $H\cong \oplus_{i=1}^n \ft/f_i(t)$ where $f_1(t),\dots,f_n(t)\in \ft$. 
We refer to $\ord(H):=\prod_{i=1}^n f_i(t)\in \ft$ as the \emph{order of $H$}. 
 The order is well-defined up to multiplication by a unit in $\ft$. 
 Furthermore it is non-zero if and only if  $H$ is an $\ft$-torsion module.
 
\subsection{The definition of twisted Alexander polynomials}
\label{section:deftap}
%

Let $X$ be a CW-complex, let $\phi\in H^1(X;\Z)$ and let  $\a\colon \pi_1(X)\to \gl(k,\F)$ be a representation over a field $\F$. We write $\pi=\pi_1(X)$ and $\fkt:=\F^k\otimes_{\Z} \zt$ and we denote by $\a\otimes \phi$ the tensor representation
\[ \ba{rcl} \a\otimes \phi\colon \pi&\to &\aut_{\ft}(\fkt)\\[2mm]
g&\mapsto & \left(\ba{rcl} \fkt&\to &\fkt\\ \sum_i v_i\otimes p_i(t)&\mapsto& \sum_i\a(g)(v_i)\otimes t^{\phi(g)}p_i(t)\ea\right).\ea \]
This allows us to view $\fkt$ as a left $\Z[\pi]$-module. 
We then consider the twisted homology groups $H_i^{\a\otimes \phi}(X;\fkt)$ which are naturally $\ft$-modules. Given $i\in \N$ we denote by $\Delta_{X,\phi,i}^\a\in \ft$ the order of the $\ft$-module $H_i^{\a\otimes \phi}(X;\fkt)$
and we refer to it as the  \emph{$i$-th twisted Alexander polynomial of $(X,\phi,\a)$}. We refer to the original papers \cite{Li01,Wa94,Ki96,KL99} and the survey papers \cite{FV10,DFL14} for more information on twisted Alexander polynomials.

The twisted Alexander polynomials are well-defined up to multiplication by a unit in $\ft$, i.e.\ up to multiplication by some $at^k$ where $a\in \F\sm\{0\}$ and $k\in \Z$. In the following, given $p,q\in \ft$ we write $p\doteq q$ if $p$ and $q$ agree up to multiplication by a unit in $\ft$.

For future reference we recall two lemmas about twisted Alexander polynomials. 
The first lemma  is \cite[Lemma~2.4]{FK06}.

\begin{lemma}\label{lem:delta0}
Let $X$ be a CW-complex, let $\phi\in H^1(X;\Z)$ be non-zero and let  $\a\colon \pi_1(X)\to \gl(k,\F)$ be a representation over a field $\F$.
Then the zeroth twisted Alexander polynomial $\Delta_{X,\phi,0}^\a$ is non-zero.
\end{lemma}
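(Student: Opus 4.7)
The plan is to show directly that $H_0^{\a\otimes \phi}(X;\fkt)$ is an $\ft$-torsion module by exhibiting an explicit nonzero element of $\ft$ that annihilates it; its order polynomial will then automatically be nonzero.

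First, since $X$ is connected (or at least we may restrict to the component on which $\pi_1$ is defined), $H_0(\wti{X})=\Z$ with the trivial $\pi$-action, so standard bookkeeping identifies
\[ H_0^{\a\otimes\phi}(X;\fkt) \;\cong\; (\fkt)_{\pi} \;=\; \fkt\big/\bigl\langle (\a\otimes\phi)(g)(w)-w \,:\, g\in\pi,\, w\in\fkt\bigr\rangle.\]
In particular, for every $g\in\pi$ the endomorphism $(\a\otimes\phi)(g)-\id$ of $\fkt$ factors through zero on $H_0^{\a\otimes\phi}(X;\fkt)$.

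Next, since $\phi\in H^1(X;\Z)=\hom(\pi,\Z)$ is nonzero, choose $g_0\in\pi$ with $n:=\phi(g_0)\neq 0$. By definition of the tensor representation, $(\a\otimes\phi)(g_0)$ acts on $\fkt=\F^k\otimes_\F\ft$ as the $\ft$-linear endomorphism $t^n\cdot\a(g_0)$. Set
\[ M \;:=\; t^n\a(g_0)-I \;\in\; \op{End}_{\ft}(\fkt),\]
so that $M(w)\equiv 0$ in $H_0^{\a\otimes\phi}(X;\fkt)$ for every $w\in\fkt$ by the previous paragraph.

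Now I would compute $\det(M)\in\ft$. Expanding the determinant of $t^n\a(g_0)-I$, the highest-order term in $t$ is $\det(\a(g_0))\,t^{kn}$, and $\det(\a(g_0))\in\F^\times$ because $\a(g_0)\in\gl(k,\F)$; hence $\det(M)\neq 0$ in $\ft$. Applying the classical adjugate identity $\det(M)\cdot I=\op{adj}(M)\cdot M$ pointwise gives
\[ \det(M)\cdot v \;=\; \op{adj}(M)\bigl(M(v)\bigr) \quad \text{for every } v\in\fkt,\]
and the right-hand side vanishes in $H_0^{\a\otimes\phi}(X;\fkt)$. Therefore $\det(M)$ kills the finitely generated $\ft$-module $H_0^{\a\otimes\phi}(X;\fkt)$, which is consequently torsion, so its order $\Delta_{X,\phi,0}^\a$ is nonzero.

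There is no real obstacle here; the only subtle points are the conventions (making sure that, after the involution, $H_0$ really does become the coinvariants, and that the action of $g_0$ is multiplication by $t^n\a(g_0)$ rather than its inverse), which are settled by unwinding the definitions in Section~\ref{section:tap}.
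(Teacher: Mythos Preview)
Your proof is correct. The paper itself does not give a proof of this lemma---it simply cites \cite[Lemma~2.4]{FK06}---and the argument you wrote (identifying $H_0^{\a\otimes\phi}(X;\fkt)$ with the coinvariants $(\fkt)_\pi$ and exhibiting $\det(t^{\phi(g_0)}\a(g_0)-I)\ne 0$ as an explicit annihilator) is exactly the standard proof that appears in that reference.

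One very small point worth making explicit: when you conclude that $\op{adj}(M)(M(v))$ vanishes in $H_0$, you are implicitly using that $\op{adj}(M)$ preserves the relations submodule. This is immediate once you note that $\op{adj}(M)\cdot M=M\cdot\op{adj}(M)$, so $\op{adj}(M)(M(v))=M(\op{adj}(M)(v))$ is again of the form $(\a\otimes\phi)(g_0)(w)-w$ and hence a relation.
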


Given a representation $\a\colon \pi\to \gl(k,\F_p)$ we denote by $\a^*\colon\pi\to \gl(k,\F_p)$ the representation which is given by 
$\a^*(g):=\a(g^{-1})^t$ for $g\in \pi$.
The following lemma is \cite[Proposition~2.5]{FK06}.

\begin{lemma}\label{lem:delta2}
Let $N$ be a 3-manifold, let $\phi\in H^1(N;\Z)$ be non-zero and let  $\a\colon \pi_1(N)\to \gl(k,\F)$ be a representation over a field $\F$. Suppose that 
$\Delta_{N,\phi,1}^\a$ is non-zero. 
Then the following hold:
\bn
\item If $N$ has non-trivial boundary, then  $\Delta_{N,\phi,2}^\a\doteq 1$.
\item If $N$ is closed, then $ \Delta_{N,\phi,2}^\a\doteq \Delta_{N,\phi,0}^{\a^*}$.
\en
\end{lemma}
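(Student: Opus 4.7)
The proof naturally splits according to whether $\partial N$ is empty or not, and in both cases the central tool is the universal coefficient theorem over the PID $\ft$, combined either with an Euler-characteristic computation or with Poincar\'e duality.

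\emph{Bounded case.} When $N$ has non-trivial toroidal boundary, $N$ collapses onto a $2$-dimensional CW complex, so the twisted chain complex $C_*(\wti{N})\otimes_{\Z[\pi]}\fkt$ is concentrated in degrees $0,1,2$. In particular $H_2^{\a\otimes\phi}(N;\fkt)$ injects into the free $\ft$-module $C_2(\wti{N})\otimes_{\Z[\pi]}\fkt$ and is hence itself free over $\ft$. On the other hand $\chi(N)=\tfrac{1}{2}\chi(\partial N)=0$ since $\partial N$ is a disjoint union of tori; combined with the fact that $H_0$ and $H_1$ are $\ft$-torsion (by Lemma~\ref{lem:delta0} and the hypothesis, respectively), the Euler-characteristic identity over $\F(t)$ forces $H_2^{\a\otimes\phi}(N;\fkt)$ to have $\ft$-rank $0$, i.e., to be $\ft$-torsion. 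Being both free and torsion, $H_2=0$, so $\Delta_{N,\phi,2}^{\a}\doteq 1$.

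\emph{Closed case.} First, a similar Euler-characteristic computation---using $\chi(N)=0$, the hypothesis $\Delta_{N,\phi,1}^{\a}\neq 0$, Lemma~\ref{lem:delta0}, and the Poincar\'e-duality identification $H_3\cong\Hom_{\ft}(H_0,\ft)=0$---shows that $H_2^{\a\otimes\phi}(N;\fkt)$ is $\ft$-torsion. Next, Poincar\'e duality for closed oriented $3$-manifolds with local coefficients gives
\[ H_2^{\a\otimes\phi}(N;\fkt)\cong H^{1}_{\a^{*}\otimes\phi}(N;\fkt), \]
where the passage from $\a$ to $\a^{*}$ arises from the involution $g\mapsto g^{-1}$ required to convert left to right $\Z[\pi]$-module structures. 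The universal coefficient theorem over the PID $\ft$ then decomposes this as
\[ \Hom_{\ft}(H_1^{\a^{*}\otimes\phi}(N;\fkt),\ft)\ \oplus\ \Ext^{1}_{\ft}(H_0^{\a^{*}\otimes\phi}(N;\fkt),\ft). \]
Lemma~\ref{lem:delta0} makes the $H_0$-term torsion, while applying Poincar\'e duality a second time to $H_1^{\a^{*}\otimes\phi}$ and using the first step shows it too is torsion; the $\Hom$-summand thus vanishes. Since $\ord(\Ext^{1}_{\ft}(M,\ft))\doteq\ord(M)$ for any finitely generated torsion $\ft$-module $M$, one concludes $\Delta_{N,\phi,2}^{\a}\doteq\Delta_{N,\phi,0}^{\a^{*}}$.

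\emph{Principal obstacle.} The main difficulty is the bookkeeping around the several involutions introduced by Poincar\'e duality with local coefficients: the involution $g\mapsto g^{-1}$ that interchanges left and right $\Z[\pi]$-module structures, the passage from $\a$ to $\a^{*}$ via $g\mapsto \a(g^{-1})^{t}$, and the interaction with $\phi$ (which affects whether $t$ or $t^{-1}$ appears in the coefficient module $\fkt$). Verifying that, after these adjustments, the order polynomial that emerges really equals $\Delta_{N,\phi,0}^{\a^{*}}$ up to units in $\ft$---as opposed to the reversal of that polynomial, for instance---is the delicate step; once this identification is in hand, both halves of the argument are essentially formal over the PID $\ft$.
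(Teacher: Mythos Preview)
The paper does not itself prove this lemma; it is simply cited as \cite[Proposition~2.5]{FK06}. Your outline is the standard argument and is essentially what one finds in that reference: for the bounded case, passing to a $2$-dimensional spine together with $\chi(N)=0$ forces $H_2^{\a\otimes\phi}(N;\fkt)$ to be simultaneously a submodule of a free $\ft$-module and of $\ft$-rank zero, hence trivial; for the closed case, Poincar\'e duality with local coefficients and the universal coefficient theorem over the PID $\ft$ identify $H_2^{\a\otimes\phi}$ with an $\Ext^1_{\ft}$ of the zeroth homology for the dual representation. The bookkeeping of involutions that you isolate as the principal obstacle is indeed the only non-formal point---in particular, the duality argument most naturally produces $\Delta_{N,-\phi,0}^{\a^*}(t)\doteq\Delta_{N,\phi,0}^{\a^*}(t^{-1})$, and one must check that this agrees with the stated conclusion---so your caution there is well placed. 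For the applications in the present paper only the degrees of these polynomials are ever used (see Lemma~\ref{lem:samedeg}), so this last subtlety is in any case immaterial here.
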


\subsection{Twisted Alexander polynomials, fiberedness and the Thurston norm}
Given a polynomial $f(t)=\sum_{k=r}^s a_kt^k\in \ft$ with $a_r\ne 0$ and $a_s\ne 0$ we define $\deg(f(t))=s-r$. We extend this definition to the zero polynomial by setting $\deg(0):=+\infty$. It is clear that the degree of a twisted Alexander polynomial is well-defined, i.e.\ not affected by the indeterminacy in the definition.

\begin{theorem}\label{thm:fv}
Let $N$ be a 3-manifold and let $\phi\in H^1(N;\Z)$ be non-zero. Then the following hold:
\bn
\item Pick a prime $p$. The class $\phi$ is fibered if and only if for any representation $\a\colon \pi_1(N)\to \gl(k,\F_p)$ we have $\Delta_{N,\phi,1}^\a \ne 0$.
\item Pick a prime $p$. Let $\a\colon \pi_1(N)\to \gl(k,\F_p)$ be a representation such that   $\Delta_{N,\phi,1}^\a$ is non-zero. Then the  twisted Alexander polynomials $\Delta_{N,\phi,i}^\a \ne 0$ are non-zero for $i=0,1,2$ and 
\[ x_N(\phi)\geq \max\left\{0,\frac{1}{k}\left(-\deg \left(\Delta_{N,\phi,0}^\a\right)+\deg \left(\Delta_{N,\phi,1}^\a\right)-\deg \left(\Delta_{N,\phi,2}^\a\right)\right)\right\}.\]
\item If $N$ is aspherical, then there exists a prime  $p$ and a representation $\a\colon \pi_1(N)\to \gl(k,\F_p)$ such that  
\[ x_N(\phi)=\max\left\{0, \frac{1}{k}\left(-\deg \left(\Delta_{N,\phi,0}^\a\right)+\deg \left(\Delta_{N,\phi,1}^\a\right)-\deg \left(\Delta_{N,\phi,2}^\a\right)\right)\right\}.\]
\en
\end{theorem}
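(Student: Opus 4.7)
The plan is to treat the three parts in turn, using twisted Reidemeister torsion as the unifying tool and drawing on the virtual specialness results for $3$-manifolds. For the easy direction of (1), if $\phi$ is fibered with fiber $F$, then the infinite cyclic cover $N_\phi\to N$ is homotopy equivalent to $F$, so after twisting by $\a$ the complex $C_*(\wti{N})\otimes_{\Z[\pi]}\fkt$ is chain homotopy equivalent to a finite complex of finitely generated $\F$-modules; consequently each $H_i^{\a\otimes\phi}(N;\fkt)$ is finitely generated over $\F$, hence $\ft$-torsion, and in particular $\Delta_{N,\phi,1}^\a\neq 0$. For the converse I would argue by contraposition following Friedl--Vidussi \cite{FV13}: assuming $\phi$ is not fibered, use Agol's virtual fibering theorem \cite{Ag08,Ag13} together with the special cube complex technology of Wise \cite{Wi09,Wi12a,Wi12b} and Przytycki--Wise \cite{PW12} to pass to a finite regular cover $p\co \wti{N}\to N$ with deck group $G$ in which $p^*\phi$ admits a sutured manifold decomposition with a nontrivial product-disk obstruction. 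Taking $\a$ to be the left regular representation of $G$ pulled back to $\pi_1(N)$, Shapiro's lemma identifies $H_1^{\a\otimes\phi}(N;\fkt)$ with the ordinary Alexander module of $\wti{N}$ with respect to $p^*\phi$, and the latter fails to be $\ft$-torsion precisely because $p^*\phi$ is not fibered.

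For part (2), the plan is to follow the Friedl--Kim argument \cite{FK06}. I would pick a norm-minimizing surface $\Sigma$ dual to $\phi$ and cut $N$ along $\Sigma$ to obtain a manifold $M$ whose boundary contains two copies $\Sigma_\pm$ of $\Sigma$. The Mayer--Vietoris sequence for this decomposition, twisted by $\a\otimes\phi$, yields a long exact sequence of $\ft$-modules built out of the twisted homologies of $\Sigma$ and $M$ with coefficients in $\F^k$. Multiplicativity of orders along this sequence, combined with the elementary rank estimates $\dim_\F H_i(\Sigma;\F^k)\leq k\,(-\chi(\Sigma))+2k$ and $\dim_\F H_i(M;\F^k)\leq k\,(-\chi(M))+2k$, produces the desired inequality; Lemmas~\ref{lem:delta0} and~\ref{lem:delta2} then absorb the contributions of $\Delta_0$ and $\Delta_2$.

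The main obstacle will be part (3). The plan, following Friedl--Nagel \cite{FN15}, is to combine Agol's theorem with the RFRS structure to produce a finite regular cover $q\co \ol{N}\to N$ with deck group $H$ such that $q^*\phi$ lies in the closure of a fibered cone of the Thurston norm ball of $\ol{N}$. Taking $\a$ to be the regular representation of $H$ pulled back to $\pi_1(N)$, Shapiro's lemma identifies the twisted Alexander polynomials of $(N,\phi,\a)$ with the ordinary Alexander polynomials of $(\ol{N},q^*\phi)$. A quasi-fibered class on an aspherical $3$-manifold realizes the McMullen-type equality between the alternating sum $-\deg\Delta_0+\deg\Delta_1-\deg\Delta_2$ and the Thurston norm, and multiplicativity of the Thurston norm under finite covers yields equality in part (2) for $N$ after dividing by $k=|H|$. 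The delicate point that will need care is precisely the propagation of McMullen equality from the cover to $N$ itself, which is the technical content of \cite{FN15}; the hypothesis that $N$ is aspherical is where virtual RFRS, and hence Agol's theorem, enters at full strength.
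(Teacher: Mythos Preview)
The paper does not actually prove Theorem~\ref{thm:fv}; it records it as a compilation of results from the literature and gives pointers: the `only if' direction of (1) to \cite{Ch03,GKM05,FK06}, the `if' direction to \cite{FV13}, the inequality (2) to \cite{FK06,Fr14}, and (3) to \cite{FN15} building on \cite{FV12}. Your proposal goes further and sketches the arguments behind those citations. In outline your sketches for (2) and (3) track the cited papers correctly: cutting along a norm-minimizing surface and using Mayer--Vietoris is indeed the \cite{FK06} argument, and passing to a virtually RFRS cover where the pullback class becomes quasi-fibered, then invoking Shapiro and multiplicativity of the Thurston norm, is the \cite{FV12,FN15} strategy.

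There is, however, a genuine gap in your converse to (1). Your last step asserts that the untwisted Alexander module of the cover $\wti N$ ``fails to be $\ft$-torsion precisely because $p^*\phi$ is not fibered.'' As written this is either circular (it is the very equivalence you are trying to establish, specialized to the trivial representation) or false (there are many non-fibered classes with nonzero untwisted Alexander polynomial). The content of \cite{FV13} is not that any non-fibered pullback has vanishing Alexander polynomial, but that the RFRS tower lets you choose the cover so that $p^*\phi$ lies on the \emph{boundary} of a fibered cone of the Thurston norm ball of $\wti N$; for such a quasi-fibered but non-fibered class one then shows, via the relation between the Alexander norm and the Thurston norm on fibered cones, that the untwisted first Alexander polynomial vanishes. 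Your phrase ``sutured manifold decomposition with a nontrivial product-disk obstruction'' does not capture this mechanism. You should replace that step with: (i) use virtual RFRS to arrange $p^*\phi$ quasi-fibered, and (ii) prove that a quasi-fibered non-fibered class has infinite-rank first Alexander module over $\ft$, which is the substantive lemma in \cite{FV13}.
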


Here the `only if' direction of (1) was proven by various authors, see e.g.\ \cite{Ch03,GKM05,FK06}. The `if' direction was proven in \cite{FV13}. 
The inequality in (2) was proved in \cite{FK06,Fr14}. Finally statement (3) is proven in \cite{FN15} building on \cite{FV12}. Here the `if' statement of (1) and the proof of (3) build on the work of Agol \cite{Ag08,Ag13}, Przytycki--Wise \cite{PW12} and Wise \cite{Wi09,Wi12a,Wi12b}.

\subsection{Degrees of twisted Alexander polynomials}
Throughout the paper, given a group $\pi$, $\phi\in H^1(\pi;\Z)=\hom(\pi,\Z)$ and $n\in \N$ we denote by $\phi_n\colon \pi\xrightarrow{\phi} \Z\to \Z_n$ the composition of $\phi$ with the obvious projection map. 
Furthermore, given a representation $\a\colon \pi\to \gl(k,\F)$ and $n\in \N$ we write $\fktn=\F^k\otimes_{\Z} \ztn$ and we denote by $\a\otimes \phi_n\colon \pi\to \aut(\fktn)$ the  representation
which is defined in a completely analogous way as we defined $\a\otimes \phi$ above. Later on we will make use of the following proposition.

\begin{proposition}\label{prop:tapdegree}
Let $X$ be a CW-complex, let $\phi\in H^1(X;\Z)$ be non-trivial and let $\a\colon \pi_1(X)\to \gl(k,\F_p)$ be a representation. Then the following equalities hold:
\[ \ba{rcl} \deg \Delta_{X,\phi,0}^\a(t)&\hspace{-0.2cm}=\hspace{-0.2cm}& \max\left\{\hspace{-0.1cm} \dim_{\F_p}\hspace{-0.1cm}\left( H_0^{\a\otimes \phi_n}(X;\zpktn)\right)\bigg| n\in \N\right\}\\[2mm]
\deg \Delta_{X,\phi,1}^\a(t)&\hspace{-0.2cm}=\hspace{-0.2cm}& \max\left\{ \hspace{-0.1cm}\dim_{\F_p}\hspace{-0.1cm}\left( H_1^{\a\otimes \phi_n}(X;\zpktn)\right)- \dim_{\F_p}\hspace{-0.1cm}\left( H_0^{\a\otimes \phi_n}(X;\zpktn)\right)\bigg| n\in \N\right\}.\ea\]
\end{proposition}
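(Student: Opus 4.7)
My plan is to derive both identities from a single long exact sequence in twisted homology and then to reduce the statement to a purely algebraic claim about finitely generated $\fpt$-modules. Write $H_i := H_i^{\a\otimes\phi}(X;\zpkt)$ throughout.

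The first step is to introduce the short exact sequence of left $\Z[\pi_1(X)]$-modules
\[
0 \longrightarrow \zpkt \xrightarrow{\,t^n-1\,} \zpkt \longrightarrow \zpktn \longrightarrow 0,
\]
in which the first two terms carry the representation $\a\otimes\phi$ and the third carries $\a\otimes\phi_n$. Equivariance is immediate: multiplication by $t^n - 1$ commutes with each $\a\otimes\phi(g)$ because $\fpt$ is commutative, and the projection intertwines $\a\otimes\phi$ with $\a\otimes\phi_n$ by the definition of $\phi_n$. Tensoring with the free complex $C_*(\wti X)$ over $\Z[\pi_1(X)]$ gives a short exact sequence of chain complexes, whose long exact sequence in homology decomposes, for every $i\ge 0$, into
\[
0 \longrightarrow H_i/(t^n-1)H_i \longrightarrow H_i^{\a\otimes\phi_n}(X;\zpktn) \longrightarrow \ker\!\bigl(t^n-1\co H_{i-1}\to H_{i-1}\bigr) \longrightarrow 0,
\]
with $H_{-1} = 0$.

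The second step is to read off the numerical identities. For $i=0$ the sequence immediately yields $H_0^{\a\otimes\phi_n}(X;\zpktn) = H_0/(t^n-1)H_0$. By Lemma~\ref{lem:delta0} the module $H_0$ is $\fpt$-torsion and hence finite-dimensional over $\F_p$, so the $\F_p$-dimensions of $\ker(t^n-1\co H_0\to H_0)$ and of $H_0/(t^n-1)H_0$ coincide. Substituting this into the $i=1$ sequence yields
\[
\dim_{\F_p} H_1^{\a\otimes\phi_n}(X;\zpktn) - \dim_{\F_p} H_0^{\a\otimes\phi_n}(X;\zpktn) = \dim_{\F_p} H_1/(t^n-1)H_1.
\]
Both identities of the proposition therefore reduce to a single algebraic claim: for every finitely generated $\fpt$-module $M$,
\[
\max_{n\in\N}\, \dim_{\F_p}\bigl(M/(t^n-1)M\bigr) \;=\; \deg\ord(M),
\]
with the convention $\deg 0 = +\infty$.

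The main obstacle is this last algebraic maximum principle. Decompose $M \cong \fpt^{\,r} \oplus \bigoplus_j \fpt/(g_j)$ with $g_j \ne 0$; if $r \ge 1$ the free part contributes $nr$ and both sides equal $+\infty$. If $r = 0$, clear $t$-denominators to assume $g_j \in \F_p[t]$ with $g_j(0)\ne 0$; then $\dim_{\F_p}(M/(t^n-1)M) = \sum_j \deg\gcd(g_j,t^n-1) \le \sum_j \deg g_j = \deg\ord(M)$. The key observation is that $\ol{\F_p}^{\,\times}$ is a torsion group, so every root of $\prod_j g_j$ in $\ol{\F_p}$ is a root of unity of some order coprime to $p$. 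Letting $L$ be a common multiple of these orders and $N$ at least the largest multiplicity with which any root appears in $\prod_j g_j$, the identity $t^{p^N L}-1 = (t^L-1)^{p^N}$ in $\F_p[t]$ forces $g_j \mid t^{p^N L}-1$ for every $j$, so the upper bound is attained at $n = p^N L$. This completes the argument.
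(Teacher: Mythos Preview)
Your proof is correct and follows essentially the same route as the paper: both relate $H_i(X;\Lambda_n^k)$ to $H_i(X;\Lambda^k)$ via a change of coefficients (you use the long exact sequence associated to multiplication by $t^n-1$, the paper uses the equivalent universal coefficient theorem over the PID $\Lambda$) and then reduce to the same algebraic maximum principle $\max_n \dim_{\F_p}(M\otimes_\Lambda \Lambda_n)=\deg\ord(M)$. Your verification that the maximum is attained, via roots of unity in $\ol{\F_p}$ and the identity $t^{p^NL}-1=(t^L-1)^{p^N}$, is more explicit than the paper's one-line observation that multiplication by $t$ has finite order on a finite module, but the overall structure is the same.
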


The proof of this proposition will require the remainder of this section.
For simplicity we will henceforth write $\F=\F_p$, $\Lambda=\ft$ and for each $n\in \N$ we write $\Lambda_n=\ft/(t^n-1)$.
We first recall several elementary lemmas.

\begin{lemma}\label{lem:dimdeg}
For any $\L$-module $M$ we have  $ \dim_\F(M)=\deg(\ord(M))$.
\end{lemma}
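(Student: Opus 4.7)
The plan is to invoke the structure theorem for finitely generated modules over a PID, since $\L=\fpt$ is a PID. This gives a decomposition $M\cong \bigoplus_{i=1}^n \L/(f_i)$ with $f_i\in \L$, and by definition $\ord(M)\doteq \prod_{i=1}^n f_i$, hence $\deg(\ord(M))=\sum_{i=1}^n \deg(f_i)$ (using the convention $\deg(0)=+\infty$). Both sides of the claimed identity are additive over direct sums of $\L$-modules, so it suffices to establish the lemma for a single cyclic summand, i.e.\ to prove $\dim_\F(\L/(f))=\deg(f)$ for arbitrary $f\in\L$.

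The remaining step is a direct calculation on $\L/(f)$. If $f=0$, then $\L/(f)=\L$ has infinite $\F$-dimension, which matches $\deg(0)=+\infty$. If $f\ne 0$, write $f=\sum_{k=r}^{s}a_k t^k$ with $a_r,a_s\ne 0$, so that $\deg(f)=s-r$. The element $t^{-r}$ is a unit in $\L$, so $(f)=(t^{-r}f)$ as ideals, and $g(t):=t^{-r}f$ is a genuine polynomial in $\F[t]$ of degree $s-r$ with $g(0)=a_r\ne 0$. Since $g(0)\ne 0$, the element $t$ is invertible in $\F[t]/(g)$, which implies that the natural map $\F[t]/(g)\to \F[t^{\pm 1}]/(g)=\L/(f)$ is an isomorphism. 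The $\F$-vector space $\F[t]/(g)$ has basis $\{1,t,\dots,t^{\deg(g)-1}\}$, so
\[ \dim_\F(\L/(f))=\deg(g)=s-r=\deg(f), \]
as required.

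There is no real obstacle here; the lemma is essentially linear algebra once the PID structure theorem is invoked. The only mildly delicate point is being careful with the convention $\deg(0)=+\infty$ so that the identity remains valid when $M$ has a free $\L$-summand, and noting that passing from the Laurent ring $\F[t^{\pm 1}]$ to the polynomial ring $\F[t]$ modulo $(g)$ is harmless precisely because $g(0)\ne 0$.
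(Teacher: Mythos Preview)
Your proof is correct and follows essentially the same approach as the paper: both reduce to the cyclic case via the structure theorem for finitely generated modules over the PID $\L$, and then verify $\dim_\F(\L/(f))=\deg(f)$ for a single $f\in\L$. The paper simply states the cyclic case as an observation, whereas you spell it out carefully by clearing the negative powers of $t$ and identifying $\L/(f)$ with $\F[t]/(g)$; this extra detail is fine and does not constitute a different method.
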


\begin{proof}
We first observe that 
for any  polynomial $q(t)\in \L$, not necessarily non-zero, we  have $\dim_{\F}(\L/q(t)\L)=\deg(q(t))$. Since $\L$ is a PID the general case of the lemma follows  immediately from the cyclic case.
\end{proof}

\begin{lemma}\label{lem:tensordim}
Let $M$ be an $\L$-module. Then
\[ \dim_{\F}(M)=\max\left\{ \dim_{\F}\left(M\otimes_{\L}\L_n\right)\,\,\Big|\,\,n\in \N\right\}.\]
\end{lemma}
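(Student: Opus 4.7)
The plan is to prove the two inequalities in the claimed equality separately. For the easy direction, the natural identification $M\otimes_\L \L_n \cong M/(t^n-1)M$ exhibits $M\otimes_\L \L_n$ as an $\F$-vector space quotient of $M$, so $\dim_\F(M\otimes_\L \L_n)\leq \dim_\F(M)$ for every $n\in\N$; this gives $\max_n \dim_\F(M\otimes_\L \L_n)\leq \dim_\F(M)$ immediately.

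For the reverse direction I would invoke the structure theorem for finitely generated modules over the PID $\L$ to decompose
\[ M \;\cong\; \L^{r}\oplus \bigoplus_{i=1}^{s}\L/(f_i(t)), \qquad f_i(t)\neq 0,\]
and split into two cases on the free rank $r$. If $r\geq 1$, then $\L\otimes_\L \L_n\cong \L_n\cong \F[t]/(t^n-1)$ has $\F$-dimension $n$, so $\dim_\F(M\otimes_\L \L_n)\geq rn$ grows without bound, matching the value $\dim_\F(M)=+\infty$ on the left.

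The substantive case is $r=0$, in which $M$ is torsion and $\dim_\F(M)=\sum_{i=1}^s \deg f_i<\infty$ by Lemma~\ref{lem:dimdeg}. Multiplication by $t$ defines an automorphism $T\in \gl_\F(M)$, and because $\F=\F_p$ is finite and $M$ is a finite-dimensional $\F$-vector space, the group $\gl_\F(M)$ is itself finite. Hence $T$ has finite multiplicative order $N$, so $(t^N-1)$ annihilates $M$; therefore $M\otimes_\L \L_N = M/(t^N-1)M = M$, and this value of $n$ realizes the maximum with $\dim_\F(M\otimes_\L \L_N)=\dim_\F(M)$.

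I do not anticipate a serious obstacle. The only mildly delicate point is that the argument uses the finiteness of $\F_p$ to pass from $T\in \gl_\F(M)$ to $T$ having finite multiplicative order; in a general-field setup one would have to argue instead via the minimal polynomial of $T$, whose roots lie in $\overline{\F}^\times$ and are algebraic over $\F$. Since the statement fixes $\F=\F_p$, the finiteness argument is the cleanest route.
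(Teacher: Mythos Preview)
Your proof is correct and follows essentially the same route as the paper: split off the free part, handle $r\geq 1$ by the unbounded growth of $\dim_\F \L_n^r$, and in the torsion case find an $n$ with $(t^n-1)M=0$ so that $M\otimes_\L \L_n\cong M$. The only cosmetic difference is that the paper phrases the torsion step as ``$M$ is a finite abelian group, so some power of $t$ acts trivially,'' whereas you phrase it via the finiteness of $\gl_\F(M)$; these are the same observation.
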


\begin{proof}
We write $M=\L^r\oplus T$ where $T$ is a torsion $\L$-module.
First we consider the case that $r>0$. In this case  $\dim_{\F}(M)=\infty$. On the other hand, for any $n$ we have 
\[ \dim_{\F}\left(M\otimes_{\L}\L_n\right)\geq \dim_{\F}\left(\L^r\otimes_{\L}\L_n\right)=\dim_{\F}\left(\L_n^r\right)=rn.\]
Thus we  showed that the claimed equality holds if $r>0$.

Now suppose that $r=0$. For any $n$ the ring epimorphism $\L\to \L_n$ induces an epimorphism $M=M\otimes_{\L}\L\to M\otimes_{\L}\L_n$. We thus see that for any $n$ we have 
\[ \dim_{\F}(M)\geq  \dim_{\F}\left(M\otimes_{\L}\L_n\right).\]
Since $M$ is in particular a finite abelian group there exists an $n$ such that multiplication by $t^n$ acts like the identity on $M$. Put differently, multiplication by $t^n-1$ is the zero map. For such $n$ it is straightforward to see that the map
\[ \ba{rcl} M\otimes_{\L}\L_n&\to & M\\
m\otimes [q(t)]&\to&mq(t)\ea\]
is a well-defined isomorphism of $\F$-modules. In particular 
$\dim_{\F}( M)=\dim_{\F}(M\otimes_{\L}\L_n)$. Together with the above inequality this implies the lemma.
 \end{proof}

\begin{lemma}\label{lem:utc}
Let $C_*$ be a chain complex of modules over $\L$ such that $H_0(C_*)$ is $\L$-torsion. Then 
\[ \ba{rcl}H_0(C_*\otimes_{\L}\L_n)&=& H_0(C_*)\otimes_{\L}\L_n\\[2mm]
 H_1(C_*\otimes_{\L}\L_n)&\cong& H_1(C_*)\otimes_{\L}\L_n\,\,\oplus\,\, H_0(C_*)\otimes_{\L}\L_n.\ea \]
\end{lemma}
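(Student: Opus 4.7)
The plan is to apply the universal coefficient theorem for chain complexes over the PID $\L$, and then to identify a specific Tor term. More precisely, I would invoke the split short exact sequence
\[ 0 \longrightarrow H_i(C_*) \otimes_\L \L_n \longrightarrow H_i(C_* \otimes_\L \L_n) \longrightarrow \tor_1^\L(H_{i-1}(C_*), \L_n) \longrightarrow 0 \]
for each $i$ (assuming, as is implicit in the intended application, that $C_*$ consists of free $\L$-modules). For $i=0$ there is no contribution from $H_{-1}$, so the map $H_0(C_*) \otimes_\L \L_n \to H_0(C_* \otimes_\L \L_n)$ is an isomorphism, which is the first claim.

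For $i = 1$ the sequence becomes
\[ 0 \longrightarrow H_1(C_*) \otimes_\L \L_n \longrightarrow H_1(C_* \otimes_\L \L_n) \longrightarrow \tor_1^\L(H_0(C_*), \L_n) \longrightarrow 0 \]
and splits, so the second isomorphism in the lemma reduces to the claim that
\[ \tor_1^\L(H_0(C_*), \L_n) \,\cong\, H_0(C_*) \otimes_\L \L_n. \]
To prove this, I would use the free resolution $0 \to \L \xrightarrow{\cdot (t^n-1)} \L \to \L_n \to 0$, which identifies the two sides with the kernel and cokernel respectively of the map $\cdot (t^n - 1) \colon H_0(C_*) \to H_0(C_*)$. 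Since $\L$ is a PID and $H_0(C_*)$ is a finitely generated torsion $\L$-module, the structure theorem reduces the problem to the cyclic case $M = \L/(q)$.

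For a cyclic summand $\L/(q)$, setting $d = \gcd(q, t^n-1)$ and writing $q = d q'$, one checks directly that $\coker(\cdot(t^n-1)) = \L/(q, t^n-1) = \L/(d)$ and, by the identity $q \mid (t^n-1)a \Longleftrightarrow q' \mid a$, that $\ker(\cdot(t^n-1)) = q'\L/q\L \cong \L/(d)$. Thus kernel and cokernel are isomorphic summand by summand, hence as $\L$-modules. The main (and essentially only) obstacle is this Tor calculation; I expect everything else — the UCT and the splitting — to be standard homological algebra once free-module hypothesis on $C_*$ is in place.
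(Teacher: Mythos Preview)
Your approach is correct and coincides with the paper's: both invoke the universal coefficient theorem over the PID $\L$ and then identify $\tor_1^\L(H_0(C_*),\L_n)$ with $H_0(C_*)\otimes_\L \L_n$. The only difference is cosmetic: the paper states the slightly more general fact $\tor_\L(H,\L_n)\cong \tor_\L(H)\otimes_\L \L_n$ for any $\L$-module $H$ (which specializes to your claim since $H_0(C_*)$ is torsion), whereas you give an explicit cyclic-module computation of kernel and cokernel of multiplication by $t^n-1$ via $\gcd$'s; your computation is in fact a direct proof of that general fact in the case needed.
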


\begin{proof}
By the universal coefficient theorem for chain complexes of modules over the PID $\L$ we have for any $i$ that 
\[ H_i(C_*\otimes_{\L}\L_n)\cong H_i(C_*)\otimes_{\L}\L_n\oplus \tor_{\L}\left(H_{i-1}(C_*),\L_n\right).\]
The lemma now follows easily from the definitions and from the fact that for any $\L$-module $H$ we have 
\[ \tor_{\L}\left(H,\L_n\right)\cong \tor_{\L}(H)\otimes_{\L}\L_n.\]
\end{proof} 

Now we are finally in a position to prove Proposition~\ref{prop:tapdegree}.

\begin{proof}[Proof of Proposition~\ref{prop:tapdegree}]
Let $X$ be a CW-complex, let $\phi\in H^1(X;\Z)$ be non-trivial and let $\a\colon \pi_1(X)\to \gl(k,\F_p)$ be a representation. As usual we denote by $\wti{X}$ the universal cover of $X$. We consider the $\L$-chain complex
\[ C_*:=C_*(\wti{X})\otimes_{\Z[\pi_1(X)]}\L^k.\]
With this notation we have
\[ H_i(X;\L^k)=H_i(C_*)\mbox{ and }H_i(X;\L_n^k)=H_i(C_*\otimes_{\L}\L_n).\]
(Here and throughout the proof  we drop the representation in the notation for twisted homology groups.)  By Lemma~\ref{lem:delta0} we know that $H_0(X;\Lambda^k)$ is $\Lambda$-torsion. Thus in the following we can apply Lemma~\ref{lem:utc} to the chain complex $C_*$. 

First we  consider $ \deg \Delta_{X,\phi,0}^\a(t)$.
It follows from Lemmas~\ref{lem:dimdeg}, \ref{lem:tensordim} and~\ref{lem:utc} that 
\[ \ba{rcl} \deg \Delta_{X,\phi,0}^\a(t)&=& \dim_{\F}\left(H_0(X;\Lambda^k)\right)\\[2mm]
&=& \max\left\{ \dim_{\F}\left(H_0(X;\Lambda^k)\otimes_{\Lambda}\Lambda_n\right)\,\,\big|\,\, n\in \N\right\}\\[2mm]
&=& \max\left\{ \dim_{\F}\left( H_0(X;\Lambda^k_n)\right)\,\,\big|\,\, n\in \N\right\}\ea\]

Now we turn to the proof of the second equality.
It  follows from applying Lemma~\ref{lem:dimdeg} and ~\ref{lem:tensordim} once and from applying Lemma~\ref{lem:utc} twice that 
\[ \ba{rcl} \deg \Delta_{X,\phi,1}^\a(t)&=& \dim_{\F}\left(H_1(X;\Lambda^k)\right)\\[2mm]
&=& \max\left\{ \dim_{\F}\left(H_1(X;\Lambda^k)\otimes_{\Lambda}\Lambda_n\right)\,\,\big|\,\, n\in \N\right\}\\[2mm]
&=& \max\left\{ \dim_{\F}\left(H_1(X;\Lambda_n^k)\right)-\dim_{\F} \left(H_0(X;\L^k)\otimes_{\L}\L_n\right)\,\,\big|\,\, n\in \N\right\}\\[2mm]
&=& \max\left\{ \dim_{\F}\left(H_1(X;\Lambda_n^k)\right)-\dim_{\F} \left(H_0(X;\L_n^k)\right)\,\,\big|\,\, n\in \N\right\}\ea\]
\end{proof}

\section{The profinite completion and twisted Alexander polynomials}
\label{section:mainthm}

\subsection{Twisted homology groups and profinite completions}
Following Serre \cite[D.2.6~Exercise~2]{Se97} 
 we say that a group $\pi$ is  \emph{good} if the following holds:
 for any representation $\a\colon \pi\to \aut_{\Z}(A)$, where $A$ is a finite abelian group,  the inclusion $\iota\co \pi\to \what{\pi}$ induces
 for any $i$ an isomorphism  $\iota^*\colon H^i_\a(\widehat{\pi};A)\to H^i_\a(\pi;A)$.
 
The following theorem was first proved by Cavendish  \cite[Section~3.5,~Lemma~3.7.1]{Ca12}. We also refer to  \cite[(H.26)]{AFW15} for an alternative approach which builds on \cite{WZ10} and the work of  Agol \cite{Ag08}, Przytycki--Wise \cite{PW12} and  Wise \cite{Wi09,Wi12a,Wi12b}.

\begin{theorem}\label{thm:cav}
All $3$-manifold groups are good. 
\end{theorem}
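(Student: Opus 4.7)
The plan is to combine the geometric decomposition of $3$-manifolds with a result asserting that fundamental groups of graphs of good groups are good (under appropriate finiteness and separability hypotheses). The two main external ingredients are Serre's own goodness results for free groups, surface groups, and extensions by finite groups/$\Z$, together with Agol's virtual fibering theorem and the Przytycki--Wise--Wise machinery.

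First I would handle the geometric pieces of $N$. If a piece $M$ is Seifert fibered, then $\pi_1(M)$ sits in a central extension $1\to \Z\to \pi_1(M)\to \Gamma\to 1$, where $\Gamma$ is a $2$-orbifold group. Orbifold groups are virtually surface groups, which are good by Serre; Serre's results further imply that finite-index overgroups of good groups of type $FP_\infty$ are good, and that extensions of good groups of type $FP_\infty$ by good groups are good, yielding goodness of $\pi_1(M)$. If $M$ is hyperbolic with empty or toroidal boundary, then by Agol's virtual fibering theorem \cite{Ag08,Ag13} (using \cite{PW12, Wi09, Wi12a, Wi12b}) the group $\pi_1(M)$ admits a finite-index subgroup of the form $\pi_1(\Sigma)\rtimes\Z$ for some compact surface $\Sigma$; this subgroup is good as an extension of two good groups, and $\pi_1(M)$ itself is then good by finite-index ascent.

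Second I would assemble the pieces via the JSJ/geometric decomposition, expressing $\pi_1(N)$ as the fundamental group of a finite graph of groups with vertex groups the fundamental groups of the pieces and edge groups $\Z^2$. A graph-of-groups analogue of Serre's goodness theorem (the version used in Cavendish's thesis and also recorded in \cite[(H.26)]{AFW15}) asserts that if all vertex and edge groups are good and of type $FP_\infty$, and if the decomposition is \emph{efficient}, then the total group is good; efficiency means that each vertex and edge group is closed in $\widehat{\pi_1(N)}$ and the profinite topology it inherits from $\pi_1(N)$ agrees with its intrinsic profinite topology. Granting efficiency, the proof concludes by comparing the Mayer--Vietoris sequence associated to the graph-of-groups decomposition of $\pi_1(N)$ with the corresponding Mayer--Vietoris sequence in continuous cohomology of $\widehat{\pi_1(N)}$, and then invoking the five lemma with the maps on vertex and edge cohomology (which are isomorphisms by the already-established goodness of the pieces and of $\Z^2$).

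The main obstacle is exactly the verification of efficiency. One must know that the JSJ tori are separable in $\pi_1(N)$, that the pieces themselves are closed in $\widehat{\pi_1(N)}$, and that the induced profinite topologies on the pieces coincide with their intrinsic ones; this is precisely the content of the deep subgroup- and conjugacy-separability results of Wilton--Zalesskii \cite{WZ10} for $3$-manifold groups, which in turn rest on Agol's theorem and the Wise machinery. All the serious geometric group theory is packed into this one step; once efficiency is in hand, the spectral-sequence/five-lemma argument that assembles goodness from the decomposition is essentially formal.
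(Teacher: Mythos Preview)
The paper does not actually prove Theorem~\ref{thm:cav}: it merely cites Cavendish's thesis \cite{Ca12} and the alternative treatment in \cite[(H.26)]{AFW15} (the latter explicitly building on \cite{WZ10} and the Agol--Przytycki--Wise--Wise work). There is therefore no ``paper's own proof'' to compare against. Your outline is precisely the argument recorded in \cite[(H.26)]{AFW15}, so in that sense it agrees with one of the two sources the paper defers to.

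Your sketch is essentially correct for irreducible $3$-manifolds. One small omission: as written you jump straight to the JSJ decomposition, which presupposes irreducibility. To cover the theorem as stated (all $3$-manifold groups, with the paper's standing conventions) you should first apply the Kneser--Milnor prime decomposition, note that finite groups and $\Z$ are good, and then observe that goodness is preserved under free products (this is the trivial-edge-group case of the same graph-of-groups/Mayer--Vietoris argument you already invoke, and efficiency is automatic there since finitely generated $3$-manifold groups are residually finite). After that reduction your argument for the irreducible pieces goes through unchanged.
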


The fact that 3-manifolds are good gives us the following useful corollary.

\begin{corollary}\label{cor:profiniteclosedn}
Let $N_1$,~$N_2$ be   aspherical $3$-manifolds such that 
$\widehat{\pi_1(N_1)}\cong \widehat{\pi_1(N_2)}$. Then  $N_1$ is closed if and only if  $N_2$ is closed.
\end{corollary}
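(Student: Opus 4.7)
The plan is to detect whether a 3-manifold is closed via a single cohomological invariant that transports across the isomorphism of profinite completions, and to use asphericity together with Theorem~\ref{thm:cav} as the bridge between topology and the profinite world.

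First I would note that for a compact orientable aspherical $3$-manifold $N$ one has, up to homotopy equivalence, $N \simeq K(\pi_1(N),1)$, so $H^i(N;\F_p) \cong H^i(\pi_1(N);\F_p)$ for every $i$ and every prime $p$, where $\F_p$ carries the trivial $\pi_1(N)$-action. On the topological side, I would then recall the standard dichotomy:
\[
H^3(N;\F_p) \;\cong\; \begin{cases} \F_p & \text{if } N \text{ is closed (Poincar\'e duality)},\\ 0 & \text{if } \partial N \neq \varnothing,\end{cases}
\]
the second case following from the fact that any compact $3$-manifold with non-empty boundary has the homotopy type of a $2$-dimensional CW complex.

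Next I would transport this invariant across the profinite isomorphism. By Theorem~\ref{thm:cav}, both $\pi_1(N_1)$ and $\pi_1(N_2)$ are good, so for each $j$ the inclusion $\pi_1(N_j)\to \widehat{\pi_1(N_j)}$ induces an isomorphism $H^i(\widehat{\pi_1(N_j)};\F_p) \xrightarrow{\cong} H^i(\pi_1(N_j);\F_p)$ for every $i$ (the trivial $\F_p$-coefficients require no compatibility beyond what $f$ already provides, since we use the trivial action on both sides). Chaining everything:
\[
H^3(N_1;\F_p) \;\cong\; H^3(\widehat{\pi_1(N_1)};\F_p) \;\cong\; H^3(\widehat{\pi_1(N_2)};\F_p) \;\cong\; H^3(N_2;\F_p),
\]
where the middle isomorphism is induced by $f$. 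Hence $N_1$ being closed, which is equivalent to $H^3(N_1;\F_p)\neq 0$, is equivalent to $N_2$ being closed.

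The only subtlety, and the one step I would be careful to verify, is the cohomological characterization of having non-empty (toroidal) boundary: one must know that such an $N$ is homotopy equivalent to a $2$-complex so that $H^3(N;\F_p)=0$, and that asphericity plus Poincar\'e duality really do give $\F_p$ in the closed case. Once those two standard facts are in hand, goodness does all the remaining work and the corollary follows immediately. I do not expect any substantial obstacle beyond these bookkeeping points, since the profinite completion distinguishes the two cases through a single $\F_p$-Betti number in dimension three.
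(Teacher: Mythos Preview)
Your proposal is correct and follows essentially the same argument as the paper: use goodness (Theorem~\ref{thm:cav}) to transport $H^3$ with $\F_p$-coefficients across the profinite isomorphism, and then invoke the fact that an aspherical $3$-manifold is closed if and only if this group is non-zero. The paper's proof is slightly terser and fixes $p=2$, but the content is identical.
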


\begin{proof}
By goodness and by assumption we have isomorphisms
\[  H^3(\pi_1(N_1);\Z_2)\xleftarrow{\cong }
H^3(\widehat{\pi_1(N_1)};\Z_2)\cong 
H^3(\widehat{\pi_1(N_2)};\Z_2)\xrightarrow{\cong} H^3(\pi_1(N_2);\Z_2).\]
The corollary follows from the fact that an aspherical 3-manifold is closed if and only if the third cohomology group with $\Z_2$-coefficients is non-zero.
\end{proof}

The main goal of this section is to prove the following proposition.

\begin{proposition}\label{prop:sametwistedhomologies}
Let $\pi_1$ and $\pi_2$ be good groups. Let  $f\colon \what{\pi_1}\xrightarrow{\cong} \what{\pi_2}$ be an isomorphism. Let $\b\colon \pi_2\to \gl(k,\F_p)$ be a representation. Then for any $i$ we have an isomorphism
\[ H_i^{\b\circ f}(\pi_1;\F_p^k)\cong H_i^\b(\pi_2;\F_p^k).\]
\end{proposition}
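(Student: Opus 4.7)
The plan is to establish the statement first in cohomology, where goodness directly applies, and then pass to homology using $\F_p$-duality of chain complexes. I would begin by exploiting Lemma~\ref{lem:samefinitequotients}: since $\gl(k,\F_p)$ is finite, the representation $\b$ extends uniquely to a continuous homomorphism $\what{\b}\co \what{\pi_2}\to\gl(k,\F_p)$, and by definition $\b\circ f$ is the restriction to $\pi_1$ of $\what{\b}\circ f$. Thus in both cases the action on $\F_p^k$ factors through a finite quotient, so $\F_p^k$ is a continuous finite $\Z[\what{\pi_i}]$-module, putting us squarely in the hypothesis of goodness.

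The next step is the cohomological version. Goodness of $\pi_1$ and $\pi_2$ (both are $3$-manifold groups here by the intended application, via Theorem~\ref{thm:cav}, but formally the proposition assumes goodness) gives isomorphisms
\[ H^i_{\b\circ f}(\pi_1;\F_p^k)\cong H^i_{\what{\b}\circ f}(\what{\pi_1};\F_p^k),\qquad H^i_{\b}(\pi_2;\F_p^k)\cong H^i_{\what{\b}}(\what{\pi_2};\F_p^k).\]
Because $f$ is an isomorphism of profinite groups (automatically continuous by \cite{NS07}) and identifies the two actions by construction, it induces an isomorphism on the middle continuous cohomology groups. Chaining yields $H^i_{\b\circ f}(\pi_1;\F_p^k)\cong H^i_{\b}(\pi_2;\F_p^k)$.

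To transfer this to homology I would invoke the following elementary duality. For any group $\pi$ and representation $\a\co\pi\to\gl(k,\F_p)$, hom-tensor adjunction applied to a free resolution $C_*$ of $\Z$ over $\Z[\pi]$ gives
\[ \hom_{\F_p}\bigl(C_*\otimes_{\Z[\pi]}\F_p^k,\,\F_p\bigr)\;\cong\;\hom_{\Z[\pi]}\bigl(C_*,\,\hom_{\F_p}(\F_p^k,\F_p)\bigr),\]
where the right-hand side carries the contragredient action $\a^*(g)=\a(g^{-1})^t$. Because $\F_p$ is a field, taking cohomology commutes with $\hom_{\F_p}(-,\F_p)$, and we obtain a natural isomorphism $H_i^{\a}(\pi;\F_p^k)\cong \hom_{\F_p}\bigl(H^i_{\a^*}(\pi;\F_p^k),\F_p\bigr)$. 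Applying the cohomology result from the previous paragraph to $\b^*$ in place of $\b$, and using $(\b\circ f)^*=\b^*\circ f$, then dualizing over $\F_p$, produces the desired isomorphism in homology.

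The most delicate point, and the main thing I would need to double-check, is Step~3: getting the module conventions right so that switching between tensor and $\hom$ really does interchange $\a$ with $\a^*$ and so that the duality is natural in the group variable (so that it intertwines with the maps induced by $f$ and by $\iota\co\pi_i\to\what{\pi_i}$). Once this bookkeeping is handled, the rest is a formal composition of isomorphisms, and the substance of the argument lies entirely in goodness plus the continuity of $f$.
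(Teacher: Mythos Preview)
Your proposal is correct and follows essentially the same approach as the paper. The paper isolates your Step~3 duality as a separate lemma (Lemma~\ref{lem:homcohom}, stating $H^i_{\g^*}(\pi;\F^k)\cong H_i^\g(\pi;\F^k)$) and then applies goodness directly with the contragredient representation $\b^*$ rather than first stating the cohomology result for $\b$ and specializing; but the content and order of ideas are the same, including the observation $(\b\circ f)^*=\b^*\circ f$.
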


Before we give a proof of the proposition we need to formulate a lemma which relates twisted cohomology groups to twisted homology groups. Here we recall that 
given a representation $\a\colon \pi\to \gl(k,\F)$ over a field $\F$ we denote by $\a^*\colon\pi\to \gl(k,\F)$ the representation which is given by 
$\a^*(g):=\a(g^{-1})^t$ for $g\in \pi$.

\begin{lemma}\label{lem:homcohom}
Let $\pi$ be a group and let  $\g\colon\pi\to \gl(k,\F)$ be a representation over a field $\F$.
Then $H^i_{\g^*}(\pi;\F^k)\cong H_i^\g(\pi;\F^k)$ for any $i$.
\end{lemma}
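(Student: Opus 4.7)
My plan is to deduce this from the standard duality between cohomology with $V^*$ coefficients and the $\F$-linear dual of homology with $V$ coefficients, together with the observation that the $\F$-linear dual of the module $\F^k_\gamma$ is, canonically as a left $\Z[\pi]$-module, the module $\F^k_{\gamma^*}$. First I would fix a free $\Z[\pi]$-resolution $P_*\to\Z$ in which each $P_n$ is finitely generated; this is legitimate in the intended application, where $\pi=\pi_1(N)$ for a finite aspherical $3$-complex $N$, so that one can take $P_*$ to be the cellular chain complex of $\wti{N}$. Both sides of the claimed isomorphism can then be computed from the same resolution, as $H^i_{\gamma^*}(\pi;\F^k)=H^i\bigl(\hom_{\Z[\pi]}(P_*,\F^k_{\gamma^*})\bigr)$ and $H_i^{\gamma}(\pi;\F^k)=H_i\bigl(P_*\otimes_{\Z[\pi]}\F^k_\gamma\bigr)$.

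The central step is to exhibit a natural chain-complex isomorphism
\[ \hom_{\Z[\pi]}(P_*,\F^k_{\gamma^*})\xrightarrow{\ \cong\ } \hom_\F\bigl(P_*\otimes_{\Z[\pi]}\F^k_\gamma,\F\bigr), \]
sending a cochain $\varphi$ to the pairing $x\otimes v\mapsto \varphi(x)^{t}v$. This is the tensor--hom adjunction for $\Z\to\Z[\pi]$, and it is an honest isomorphism here because each $P_n$ is finitely generated free over $\Z[\pi]$. The only representation-theoretic content is the identification $\hom_\F(\F^k_\gamma,\F)\cong \F^k_{\gamma^*}$ as left $\Z[\pi]$-modules: the dual-basis action of $g$ on a functional $f$ is $f\circ\gamma(g^{-1})$, whose matrix in the dual basis is $\gamma(g^{-1})^{t}=\gamma^*(g)$.

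To conclude, I would take cohomology of both sides. The left-hand side yields $H^i_{\gamma^*}(\pi;\F^k)$. On the right-hand side, since $\hom_\F(-,\F)$ is exact over a field, the universal coefficient theorem gives $H^i\bigl(\hom_\F(P_*\otimes_{\Z[\pi]}\F^k_\gamma,\F)\bigr)\cong \hom_\F\bigl(H_i^\gamma(\pi;\F^k),\F\bigr)$. Finite generation of each $P_n$ plus finite-dimensionality of $\F^k$ forces the chain groups $P_*\otimes_{\Z[\pi]}\F^k_\gamma$ to be finite-dimensional over $\F$, so $H_i^\gamma(\pi;\F^k)$ is finite-dimensional and hence abstractly isomorphic to its $\F$-linear dual. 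Combining the three steps produces the desired (non-canonical) isomorphism $H^i_{\gamma^*}(\pi;\F^k)\cong H_i^\gamma(\pi;\F^k)$.

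The main bookkeeping hurdle I anticipate is reconciling the involution $g\mapsto g^{-1}$ used to convert $C_*(\wti{X})$ into a right $\Z[\pi]$-module with the transpose that enters the definition of $\gamma^*$: both conventions must be made compatibly so that the natural pairing above is $\Z[\pi]$-balanced on the right-hand copy of $\F^k_\gamma$ and $\Z[\pi]$-equivariant on the left-hand copy of $\F^k_{\gamma^*}$. Once this matrix computation is in hand, the remainder of the argument is entirely formal and parallels the standard duality computations in the twisted Alexander polynomial literature (compare \cite[Prop.~2.5]{FK06}).
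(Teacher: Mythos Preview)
Your proposal is correct and is essentially the paper's own argument: the paper also fixes a $K(\pi,1)$, writes down the very same chain-level map $\varphi\mapsto(\sigma\otimes v\mapsto\varphi(\sigma)^t v)$ between $\hom_{\Z[\pi]}(C_*(\wti{Y}),\F^k_{\gamma^*})$ and $\hom_\F(C_*(\wti{Y})\otimes_{\Z[\pi]}\F^k_\gamma,\F)$, applies the Universal Coefficient Theorem over $\F$, and finishes by identifying a finite-dimensional vector space with its dual. If anything, you are slightly more careful than the paper in flagging that the finite-dimensionality step requires each $P_n$ to be finitely generated over $\Z[\pi]$, which holds in the intended application to aspherical $3$-manifolds but is not automatic for an arbitrary group and arbitrary $i$.
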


\begin{proof}
Let $Y$ be a $K(\pi,1)$. 
As usual we denote by $\wti{Y}$ the universal cover of $Y$. 
In the following we denote by $\F^k_{\g}$, respectively $\F^k_{\g^*}$, the vector space $\F^k$ together with the structure as a left $\Z[\pi]$-module induced by the representation $\g$, respectively $\g^*$.
A direct calculation shows that 
\[ \ba{rcl} \hom_{\Z[\pi]}\left(C_*(\wti{Y}),\F^k_{\g^*}\right)&\to& \hom_\F\left(C_*(\wti{Y})\otimes_{\Z[\pi]}\F^k_{\g},\F\right)\\
(\varphi\colon C_i(\wti{Y})\to \F^k)&\mapsto&\left(\ba{rcl} C_i(\wti{Y})\otimes_{\Z[\pi]}\F^k&\to & \F \\ \sigma\otimes v&\mapsto& \varphi(\sigma)^t\,p(v)\ea\right)\ea\]
is an isomorphism of chain complexes of $\F$-vector spaces.
 It follows from this isomorphism and from the  Universal Coefficient Theorem
applied to the chain complex $C_*(\wti{Y})\otimes_{\F[\pi]}\F_{\g}^k$ and to the field $\F$  that for any $i$ we have an isomorphism
\[
  H^i_{\g^*}(Y;\F^k)\cong 
\hom_{\F}\left(H_{i}^\g(Y;\F^k),\F\right).\]
Now the claim follows from the fact that a finite-dimensional vector space is isomorphic to its dual vector space.
\end{proof}

Now we are in a position to prove Proposition~\ref{prop:sametwistedhomologies}.

\begin{proof}[Proof of Proposition~\ref{prop:sametwistedhomologies}]
Let $\pi_1$ and $\pi_2$ be good groups. Let  $f\colon \what{\pi_1}\xrightarrow{\cong} \what{\pi_2}$ be an isomorphism. Let $\b\colon \pi_2\to \gl(k,\F_p)$ be a representation. 
Since $\pi_1$ and $\pi_2$   are good we know that the inclusion maps $\pi_j\to \what{\pi_j}$, $j=1,2$ and the map $f$ give us for any $i$ isomorphisms
\[     H^i_{\b^*}(\pi_2;\F_p^k)\xleftarrow{\cong} H^i_{\b^*}(\what{\pi_2};\F_p^k)\xrightarrow{f^*} H^i_{\b^*\circ f}(\what{\pi_1};\F_p^k)\xrightarrow{\cong} H^i_{\b^*\circ f}(\pi_1;\F_p^k).\]
But by Lemma~\ref{lem:homcohom} we also have 
\[     H^i_{\b^*}(\pi_2;\F_p^k)\cong     H_i^{\b}(\pi_2;\F_p^k)\mbox{ and }
  H^i_{\b^*\circ f}(\pi_1;\F_p^k)\cong     H_i^{\b\circ f }(\pi_1;\F_p^k).\]
The proposition follows from combining all these isomorphisms.
\end{proof}

\subsection{Proof of Theorem~\ref{mainthm}}

For the reader's convenience we recall the statement of Theorem~\ref{mainthm}.\\

\noindent \textbf{Theorem~\ref{mainthm}.}
\emph{
Let $N_1$ and $N_2$ be two aspherical 3-manifolds. Suppose $f\colon \what{\pi_1(N_1)}\to \what{\pi_1(N_2)}$ is a regular isomorphism.  Let $\phi\in H^1(N_2;\Z)$.
Then 
\[ (N_2,\phi)\mbox{ is fibered}\quad \Longleftrightarrow \quad (N_1,f^*\phi_1)\mbox{ is fibered}.\]
Furthermore 
\[ x_{N_2}(\phi))=x_{N_1}(f^*\phi).\]}

In the proof of the theorem we will need the following lemma.

\begin{lemma}\label{lem:samedeg}
Let $N_1$ and $N_2$ be two 3-manifolds. Suppose $f\colon \what{\pi_1(N_1)}\to \what{\pi_1(N_2)}$ is a regular isomorphism.  Then for any non-trivial $\phi\in H^1(N_2)$
and any representation $\a\colon \pi_1(N_2)\to \gl(k,\F_p)$ we have 
\[ \deg\big(\Delta_{N_1,\phi\circ f,i}^{\a\circ f}\big)=\deg\big(\Delta_{N_2,\phi,i}^\a\big),\quad  i=0,1,2.\]
\end{lemma}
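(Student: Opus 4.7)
The strategy is to translate each degree of a twisted Alexander polynomial into a supremum of $\F_p$-dimensions of twisted homology groups attached to \emph{finite-image} representations, where the profinite isomorphism applies directly.

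For $i=0,1$ I would proceed as follows. Proposition~\ref{prop:tapdegree} rewrites both $\deg\Delta_{N_2,\phi,i}^\a$ and $\deg\Delta_{N_1,\phi\circ f,i}^{\a\circ f}$ as suprema over $n\in\N$ of $\F_p$-dimensions (resp.\ dimension differences for $i=1$) of the twisted homologies $H_j^{\a\otimes\phi_n}(N_2;\zpktn)$ and $H_j^{(\a\circ f)\otimes(\phi\circ f)_n}(N_1;\zpktn)$. The crucial compatibility is that regularity of $f$ forces the induced isomorphism on $H_1$ to be compatible with reduction modulo $n$: extending $\phi_n\colon H_1(N_2;\Z)\to\Z_n$ uniquely to the profinite completion and pulling back through $\hat f$, one obtains $\phi_n\circ f=(\phi\circ f)_n$, and consequently
\[(\a\otimes\phi_n)\circ f\;=\;(\a\circ f)\otimes(\phi\circ f)_n\]
as representations $\pi_1(N_1)\to\gl(kn,\F_p)$. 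Since this representation has finite image, and since $3$-manifold groups are good by Theorem~\ref{thm:cav}, Proposition~\ref{prop:sametwistedhomologies} applied to $\b:=\a\otimes\phi_n$ yields an isomorphism of the corresponding twisted homologies of $\pi_1(N_j)$, which under asphericity agrees with the twisted homology of $N_j$ itself. Matching $\F_p$-dimensions and taking suprema gives the claimed equality for $i=0,1$ (with the convention $\deg 0=+\infty$, so the vanishing/non-vanishing of the polynomial is also transported across).

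For $i=2$ I would invoke Lemma~\ref{lem:delta2}, noting first that $N_1$ is closed if and only if $N_2$ is, by Corollary~\ref{cor:profiniteclosedn}. In the generic case $\Delta_{N_2,\phi,1}^\a\ne 0$ (equivalently $\Delta_{N_1,\phi\circ f,1}^{\a\circ f}\ne 0$ by the previous step), the boundary case of Lemma~\ref{lem:delta2}(1) makes both second polynomials units, so both degrees are $0$; in the closed case Lemma~\ref{lem:delta2}(2) gives $\Delta_{N_2,\phi,2}^\a\doteq\Delta_{N_2,\phi,0}^{\a^*}$, and the already-proved $i=0$ equality applied to the contragredient representation $\a^*$ (combined with the identity $(\a\circ f)^*=\a^*\circ f$, which is immediate from the definitions) concludes. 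The residual case $\Delta_{N_2,\phi,1}^\a=0$ requires an extension of Proposition~\ref{prop:tapdegree} to $i=2$: adapting Lemma~\ref{lem:utc} to incorporate the $\tor_\L(H_1,\L_n)$-correction term still expresses the (possibly infinite) degree as a supremum over $n$ of finite-level dimension quantities, which Proposition~\ref{prop:sametwistedhomologies} again transports across $f$, forcing both second polynomials to be simultaneously zero.

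The main obstacle is the compatibility identity $(\a\otimes\phi_n)\circ f=(\a\circ f)\otimes(\phi\circ f)_n$, on which everything hinges: it is precisely here that regularity of $f$ is indispensable, since without it the pulled-back twist $\phi_n\circ f$ is merely a finite-valued homomorphism on $\pi_1(N_1)$ with no a priori relation to any integral cohomology class, and the two sides of the identity would fail to match.
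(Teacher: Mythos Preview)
Your argument is correct and follows the paper's proof essentially verbatim: the same compatibility identity $(\a\otimes\phi_n)\circ f=(\a\circ f)\otimes(\phi\circ f)_n$, the same reduction via Proposition~\ref{prop:tapdegree} and Proposition~\ref{prop:sametwistedhomologies} for $i=0,1$, and the same passage through $\a^*$, Lemma~\ref{lem:delta2} and Corollary~\ref{cor:profiniteclosedn} for $i=2$. One minor remark: asphericity is not hypothesised in the lemma, and the paper instead uses that $H_0$ and $H_1$ of any CW-complex depend only on the fundamental group; conversely, your explicit treatment of the degenerate case $\Delta_{N_2,\phi,1}^\a=0$ for $i=2$ is more careful than the paper's, which tacitly relies on the non-vanishing hypothesis of Lemma~\ref{lem:delta2}.
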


\begin{proof}
We first point out that for any $n$ we have
$(\a\circ f)\otimes (\phi_n\circ f)=(\a\otimes \phi_n)\circ f$. 
Also, as usual the twisted homology groups in dimensions $0$ and $1$ only depend on the fundamental group. Together with  Theorem~\ref{thm:cav} and Proposition~\ref{prop:sametwistedhomologies} this implies that for any $n$ and $i\in \{0,1\}$ we have
\[  \dim_{\F_p}\left( H_i^{\a\otimes \phi_n}(N_2;\zpktn)\right)= \dim_{\F_p}\left( H_i^{(\a\circ f)\otimes (\phi_n\circ f)}(N_1;\zpktn)\right).\]
For $i=0,1$ the equality of the lemma is now an immediate consequence of
Proposition~\ref{prop:tapdegree}.

Now we consider the case $i=2$. The argument above also shows that 
\[ \deg\left(\Delta_{N_1,\phi\circ f,0}^{\a^*\circ f}\right)=\deg\left(\Delta_{N_2,\phi,0}^{\a^*}\right).\]
The desired equality of degrees now follows from 
Lemma~\ref{lem:delta2} together with Corollary~\ref{cor:profiniteclosedn} and Proposition~\ref{prop:tapdegree}.
\end{proof}

Now we are finally in a position to prove Theorem~\ref{mainthm}.

\begin{proof}[Proof of Theorem~\ref{mainthm}]
Let $N_1$ and $N_2$ be two aspherical 3-manifolds and suppose that we are given a regular isomorphism $f\colon \what{\pi_1(N_1)}\to \what{\pi_1(N_2)}$.
Let $\phi\in H^1(N_2;\R)$. We need to show the following two statements:
\bn
\item  The class $ \phi\in H^1(N_2;\R)$  is fibered if and only if $f^*\phi\in H^1(N_1;\R)$ is fibered.
\item  $x_{N_2}(\phi)=x_{N_1}(f^*\phi)$.
\en

We first prove (1) and (2) in the special case that  $\phi\in H^1(N_2;\R)$
is an integral cohomology class.

\bn
\item 
We suppose that $\phi\in H^1(N_2;\Z)$ is non-fibered. By Theorem~\ref{thm:fv} (1) there exists a representation  $\a\colon \pi_1(N_2)\to \gl(k,\F_p)$ such that  $\deg\left(\Delta_{N_2,\phi,1}^\a\right)=\infty$. 
By Lemma~\ref{lem:samedeg} we have 
$\deg\big(\Delta_{N_1,\phi\circ f}^{\a\circ f}\big)=\infty$.
But by Theorem~\ref{thm:fv} (1) this implies that $f^*\phi\in H^1(N_1;\Z)$ is non-fibered. Running the same argument backwards we see that $\phi$ is fibered if and only if $f^*\phi$ is fibered.
\item 
By Theorem~\ref{thm:fv} (3) there exists a prime $p$ and a representation  $\a\colon \pi_1(N_2)\to \gl(k,\F_p)$ such that the twisted Alexander polynomials $\Delta_{N_2,\phi,i}^\a$, $i=0,1,2$ are non-zero and such that
\[x_{N_2}(\phi)= \frac{1}{k}\left(-\deg \left(\Delta_{N_2,\phi,0}^\a\right)+\deg \left(\Delta_{N_2,\phi,1}^\a\right)-\deg \left(\Delta_{N_2,\phi,2}^\a\right)\right).\]
By Lemma~\ref{lem:samedeg} the degrees on the right hand side are the same for the twisted Alexander polynomials $\Delta_{N_1,\phi\circ f,i}^{\a\circ f}$, $i=0,1,2$. If we combine this observation with the above equality and with Theorem~\ref{thm:fv} (2)  we obtain that  $x_{N_1}(\phi\circ f)\geq x_{N_2}(\phi)$. If we run through this argument with the roles of $N_1$ and $N_2$ switched we see that $x_{N_1}(\phi\circ f)=x_{N_2}(\phi)$. We thus obtained the desired equality.
\en

Summarizing, we just showed that $f^*\colon H^1(N_2;\Z)\to H^1(N_1;\Z)$ is an isometry with respect to the Thurston norms and it defines a bijection of the fibered classes. Since the norms are in particular homogeneous and continues it follows that  $f^*\colon H^1(N_2;\R)\to H^1(N_1;\R)$ is also an isometry with respect to the Thurston norms. Furthermore, Thurston \cite{Th86} showed that the set of fibered classes of a 3-manifold is given by the union on cones on open top-dimensional faces of the Thurston norm ball of the 3-manifolds. The fact that $f^*$ defines an isometry of Thurston norms and that it defines a bijection of integral fibered classes thus also implies that $f^*$ defines a bijection of real fibered classes.
\end{proof}

\subsection{Proof of Theorem~\ref{mainthmknots}}
In the following, recall that 
given a knot $K\subset S^3$ we denote by $X(K):=S^3\sm \nu K$ its exterior and we denote by $\pi(K):=\pi_1(S^3\sm \nu K)$ the corresponding knot group. We say that \emph{$K$ is fibered} if the knot exterior $X(K)$ is a surface bundle over $S^1$. Furthermore, we refer to the minimal genus of a Seifert surface for $K$ as the \emph{genus $g(K)$ of $K$}. 

If $\phi\in H^1(X(K);\Z)$ is a generator, then $K$ is fibered if and only if $\phi$ is a fibered class. Furthermore, it is straightforward to see that $x_{X(K)}(\phi)=\max\{0,2g(K)-1\}$. We thus see that Theorem~\ref{mainthmknots}
is an immediate consequence of the following theorem.

\begin{theorem}\label{mainthmb1=1}
Let $N_1$ and $N_2$ be two 3-manifolds with $H_1(N_1;\Z)\cong H_1(N_2;\Z)\cong \Z$. Suppose there exists an isomorphism $f\colon \what{\pi_1(N_1)}\to \what{\pi_1(N_2)}$.  Let $\phi_i\in H^1(N_i;\Z)$, $i=1,2$ be generators. 
Then $ \phi_1\in H^1(N_1;\Z)$  is fibered if and only if $\phi_2\in H^1(N_2;\Z)$ is fibered. Furthermore
\[ x_{N_1}(\phi_1)=x_{N_2}(\phi_2).\]
\end{theorem}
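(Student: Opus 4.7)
The plan is to mimic the proof of Theorem~\ref{mainthm}, but compensate for the absence of regularity by an explicit change-of-variable argument at the level of $\Z_n$-coefficients. The isomorphism $f$ induces an automorphism of $\widehat{H_1(N_i;\Z)}\cong\what{\Z}$ which is multiplication by some unit $u\in\what{\Z}^{\times}$. This $u$ need not lie in $\{\pm 1\}$, so $\phi_{2}\circ f$ does not descend to an integer class on $\pi_1(N_1)$ and we cannot invoke Theorem~\ref{mainthm} directly. However, for each $n\in\N$ the reduction $c_n:=u\bmod n\in\Z_n^{\times}$ is a genuine unit, and the finite quotient map $\phi_{2,n}\circ f\colon \pi_1(N_1)\to\Z_n$ (defined via Lemma~\ref{lem:samefinitequotients}) coincides with $c_n\cdot\phi_{1,n}$. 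The whole difficulty of the theorem is concentrated in absorbing this scalar $c_n$ when computing twisted homology.

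The key step will be the following change-of-variable. Fix a representation $\alpha\colon\pi_1(N_2)\to\gl(k,\F_p)$ and consider the two $\F_p$-linear representations of $\pi_1(N_1)$ on $V:=\F_p^k\otimes\F_p[\Z_n]$ given by $(\alpha\circ f)\otimes\phi_{1,n}$ and $(\alpha\circ f)\otimes(c_n\phi_{1,n})$. I claim these are equivalent as $\F_p$-representations via the $\F_p$-linear automorphism
\[ \Psi\colon V\to V,\qquad v\otimes t^{j}\mapsto v\otimes t^{jc_n}, \]
which is a bijection because $c_n\in\Z_n^{\times}$. The verification is the one-line identity $(j+\phi_{1,n}(g))c_n=jc_n+c_n\phi_{1,n}(g)$. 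Hence the twisted homology groups of these two representations are $\F_p$-isomorphic in every degree, and in particular have the same $\F_p$-dimension.

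From here the argument parallels Lemma~\ref{lem:samedeg}: the representation $\alpha\otimes\phi_{2,n}$ lands in $\gl(kn,\F_p)$, so Theorem~\ref{thm:cav} and Proposition~\ref{prop:sametwistedhomologies} give
\[ \dim_{\F_p}H_i^{\alpha\otimes\phi_{2,n}}\!\big(\pi_1(N_2);V\big)=\dim_{\F_p}H_i^{(\alpha\otimes\phi_{2,n})\circ f}\!\big(\pi_1(N_1);V\big), \]
and since $(\alpha\otimes\phi_{2,n})\circ f=(\alpha\circ f)\otimes(c_n\phi_{1,n})$, the change-of-variable $\Psi$ lets me replace the right-hand side by the corresponding dimension with $\phi_{1,n}$ in place of $c_n\phi_{1,n}$. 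Taking the maximum over $n$ and invoking Proposition~\ref{prop:tapdegree} yields
\[ \deg\Delta^{\alpha\circ f}_{N_1,\phi_1,i}=\deg\Delta^{\alpha}_{N_2,\phi_2,i}\quad\text{for }i=0,1, \]
and the case $i=2$ follows by combining this with Lemma~\ref{lem:delta2} (applied to $\alpha^{*}$) and Corollary~\ref{cor:profiniteclosedn}, which ensures that $N_1$ and $N_2$ are simultaneously closed or simultaneously bounded.

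With the equality of degrees in hand, fiberedness and the Thurston norm equality are read off exactly as in Theorem~\ref{mainthm}: the `if and only if' for fiberedness comes from running Theorem~\ref{thm:fv}(1) on both sides, and the Thurston norm equality from choosing a representation on $N_2$ realizing the degree formula of Theorem~\ref{thm:fv}(3), transferring degrees via the equality above, and using the inequality of Theorem~\ref{thm:fv}(2) to bound $x_{N_1}(\phi_1)\ge x_{N_2}(\phi_2)$, before swapping the roles of $N_1$ and $N_2$. Here I assume (as is implicit throughout the paper, and automatic in the knot-exterior application of Theorem~\ref{mainthmknots}) that $N_1$ and $N_2$ are aspherical so that Theorem~\ref{thm:fv}(3) is available. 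The main obstacle—the non-regularity of $f$—is overcome by the observation that twisted Alexander invariants computed via finite cyclic covers are insensitive to scaling by a unit of $\Z_n$, which is what the automorphism $\Psi$ encodes.
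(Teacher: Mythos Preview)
Your proof is correct and follows essentially the same strategy as the paper's: both transfer twisted-homology dimensions via goodness (Theorem~\ref{thm:cav} and Proposition~\ref{prop:sametwistedhomologies}), absorb the discrepancy between $\phi_{2,n}\circ f$ and $\phi_{1,n}$ by a change-of-variable on $\F_p[\Z_n]$, and then invoke Proposition~\ref{prop:tapdegree} and Theorem~\ref{thm:fv} exactly as in Theorem~\ref{mainthm}. Your automorphism $\Psi$ is precisely the content of the paper's Lemma~\ref{lem:samecyclich1}; in fact the paper's proof of that lemma claims that any two epimorphisms $\Z\to\Z_n$ agree up to sign, which is false for $n\ge 5$, so your direct treatment of an arbitrary unit $c_n\in\Z_n^{\times}$ is actually the cleaner way to carry out this step.
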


We point out that Theorem~\ref{mainthmb1=1} is not an immediate consequence of Theorem~\ref{mainthm} since we do not assume that there exists a \emph{regular} isomorphism between the profinite completions of $\pi_1(N_1)$ and $\pi_1(N_2)$.

In the proof of Theorem~\ref{mainthmb1=1} we will need one more lemma.

\begin{lemma}\label{lem:samecyclich1}
Let  $N$ be a $3$-manifold with $H_1(N;\Z)\cong \Z$.  
Let    $\b\colon \pi_1(N)\to \gl(k,\F_p)$ be a representation
and let $\phi_n\colon \pi_1(N)\to \Z_n$ and  $\psi_n\colon \pi_1(N)\to \Z_n$ be two epimorphisms. Then given any $i$ there exists an isomorphism
\[  H_i^{\b\otimes \phi_n}(N;\zpktn)\cong  H_i^{\b\otimes \psi_n}(N;\zpktn).\]
\end{lemma}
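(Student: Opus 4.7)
The plan is to exploit the fact that $H_1(N;\Z)\cong \Z$ forces $\phi_n$ and $\psi_n$ to differ only by an automorphism of $\Z_n$, and to promote that automorphism to a ring automorphism of $\fp[\Z_n]$ that transports one twisted chain complex to the other.

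First I would observe that both $\phi_n$ and $\psi_n$ factor through the Hurewicz map $\pi_1(N)\to H_1(N;\Z)\cong \Z$. Since any epimorphism $\Z\to \Z_n$ sends a chosen generator of $\Z$ to a generator of $\Z_n$, that is to a unit in $(\Z/n\Z)^\times$, there exists $a\in \Z$ with $\gcd(a,n)=1$ and $\psi_n(g)=a\cdot \phi_n(g)$ in $\Z_n$ for every $g\in\pi_1(N)$. Let $\sigma\colon \fp[\Z_n]\to\fp[\Z_n]$ be the ring automorphism determined by $t\mapsto t^a$; it is bijective because $a$ is a unit modulo $n$.

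Next I would define an $\fp$-linear bijection $\Phi\colon \zpktn\to \zpktn$ by $\Phi(w\otimes p(t))=w\otimes p(t^a)$, and verify that for every $g\in\pi_1(N)$
\[ \Phi\bigl((\b\otimes\phi_n)(g)(w\otimes p(t))\bigr)\,=\,\b(g)(w)\otimes t^{a\phi_n(g)}p(t^a)\,=\,(\b\otimes\psi_n)(g)\bigl(\Phi(w\otimes p(t))\bigr), \]
using $a\phi_n(g)=\psi_n(g)$ in $\Z_n$. Hence $\Phi$ is a $\Z[\pi_1(N)]$-module isomorphism from $\zpktn$ equipped with the $\b\otimes\phi_n$-action to $\zpktn$ equipped with the $\b\otimes\psi_n$-action. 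Tensoring with $C_*(\wti N)$ over $\Z[\pi_1(N)]$ then gives an isomorphism of the two twisted chain complexes, and passing to homology delivers the claimed isomorphism in every degree $i$.

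The argument is essentially formal, and I do not anticipate any serious obstacle. The only point that requires care is the left/right module bookkeeping, since the twisted homology in the excerpt uses the right $\Z[\pi_1(N)]$-structure on $C_*(\wti N)$ obtained from the involution $g\mapsto g^{-1}$; but once $\Phi$ has been shown to intertwine the two left $\pi_1(N)$-actions, it automatically descends to the tensor product defining the twisted homology.
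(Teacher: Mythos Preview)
Your argument is correct and follows the same idea as the paper: transport one coefficient module to the other via an automorphism of $\fp[\Z_n]$ and thereby obtain an isomorphism of twisted chain complexes. The only difference is that the paper asserts that any two epimorphisms $\pi_1(N)\to\Z_n$ agree ``up to sign'' and then treats only the automorphism $t\mapsto t^{-1}$; in fact for general $n$ one has $\psi_n=a\cdot\phi_n$ with an arbitrary unit $a\in(\Z/n\Z)^\times$, exactly as you write, so your use of $t\mapsto t^a$ is the more accurate formulation of the same argument.
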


\begin{proof}
We denote by $\wti{N}$ the universal cover of $N$.
Since $H_1(N;\Z)\cong \Z$ there exists up to sign only one epimorphism onto $\Z_n$. Therefore it suffices to show that for all $i$ we have
\[  H_i^{\b\otimes \phi_n}(N;\zpktn)\cong  H_i^{\b\otimes -\phi_n}(N;\zpktn).\]
This in turn follows from the observation that 
\[ \ba{rcl} C_*(\wti{N})\otimes_{\Z[\pi_1(N)]}\left(\F_p^k\otimes \Z[\Z_n]\right)_{\b\otimes \phi_n}&\to &  C_*(\wti{N})\otimes_{\Z[\pi_1(N)]}\left(\F_p^k\otimes \Z[\Z_n]\right)_{\b\otimes -\phi_n}\\
\sigma\otimes (v\otimes \sum_{g\in \Z_n} a_gg)&\mapsto&\sigma\otimes (v\otimes \sum_{g\in \Z_n} a_g(-g))\ea \]
is an isomorphism of chain complexes.
\end{proof}

Now we are ready to provide the proof of  Theorem~\ref{mainthmb1=1}.

\begin{proof}[Proof of  Theorem~\ref{mainthmb1=1}]
Let $M$ and $N$ be  3-manifolds with $H_1(M;\Z)\cong H_1(N;\Z)\cong \Z$
and let  $f\colon \what{\pi_1(M)}\to \what{\pi_1(N)}$ be an isomorphism.  Let $\phi\in H^1(M;\Z)$ and $\psi\in H^1(N;\Z)$ be generators. 

It follows from Lemma~\ref{lem:samecyclich1}, Theorem~\ref{thm:cav} and Proposition~\ref{prop:sametwistedhomologies} that for any $n$ and any $i\in \{0,1\}$ we have
\[  \dim_{\F_p}\left( H_i^{\a\otimes \phi_n}(M;\zpktn)\right)= \dim_{\F_p}\left( H_i^{(\a\circ f)\otimes \psi_n}(N;\zpktn)\right).\]
It  follows from  the proof of Lemma~\ref{lem:samedeg} that for any representation $\a\colon \pi_1(N)\to \gl(k,\F_p)$ we have 
\[ \deg\left(\Delta_{M,\phi,i}^{\a\circ f}\right)=\deg\left(\Delta_{N,\psi,i}^\a\right),\quad  i=0,1,2.\]
The argument of the proof of Theorem~\ref{mainthm} now carries over to prove the desired statements.
\end{proof}

\subsection{The profinite completion of the unknot, the trefoil and the Figure-8 knots}

As promised in the introduction we now recall the argument that the profinite completion detects the unknot. 

\begin{lemma} \label{lem:profinitetrivial}
Let $U$ be the unknot. If $K$ is a knot with $\what{\pi(U)}\cong \what{\pi(K)}$, then $K$ is the unknot.
\end{lemma}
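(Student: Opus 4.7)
The plan is to deduce this as an immediate consequence of Theorem~\ref{mainthmknots} (the knot-theoretic version of the main theorem, whose proof was completed in the preceding subsection). The unknot $U$ has group $\pi(U)\cong \Z$ and genus $g(U)=0$, and it is trivially fibered. So assuming $\what{\pi(U)}\cong\what{\pi(K)}$, Theorem~\ref{mainthmknots} immediately gives $g(K)=g(U)=0$.

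Then I would invoke the classical fact that the unknot is the unique knot of genus $0$: any genus-$0$ Seifert surface for $K$ is a disk bounded by $K$, so $K$ is unknotted. Equivalently, one could appeal to Dehn's lemma / the loop theorem, which implies that a knot whose exterior has cyclic (equivalently, abelian) fundamental group is the unknot. Either way, the conclusion $K=U$ is immediate.

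It is worth noting that there is nothing to prove about the existence of a \emph{regular} isomorphism here: Theorem~\ref{mainthmknots} already handles arbitrary isomorphisms of profinite completions for knot groups (via Theorem~\ref{mainthmb1=1}), because for any knot $H_1(X(K);\Z)\cong\Z$, and Lemma~\ref{lem:samecyclich1} allows one to reconcile the two possible generators of this cyclic group. Hence no additional work is needed in this case.

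Since the entire argument reduces in one step to Theorem~\ref{mainthmknots}, there is no real obstacle; the only mild point is recording the standard observation that $g(K)=0$ forces $K$ to be the unknot, which is a one-line consequence of the loop theorem. This is presumably why the authors relegate the statement to a short lemma included only for completeness.
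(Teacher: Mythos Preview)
Your argument is correct: Theorem~\ref{mainthmknots} applies to the pair $(U,K)$, yields $g(K)=g(U)=0$, and a genus-zero knot bounds a disk, hence is the unknot. There is no circularity, since the proof of Theorem~\ref{mainthmknots} (via Theorem~\ref{mainthmb1=1}) does not use Lemma~\ref{lem:profinitetrivial}.

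However, the paper proves this lemma by a much more elementary route, and this is deliberate: the introduction explicitly flags that ``it is straightforward to see that the profinite completion detects the unknot,'' meaning \emph{without} invoking the heavy machinery. The paper's argument is: by Dehn's lemma, $K$ is trivial if and only if $\pi(K)$ is abelian; since knot groups are residually finite \cite{He87}, $\pi(K)$ is abelian if and only if all its finite quotients are abelian; and the finite quotients of $\pi(K)$ coincide with those of $\what{\pi(K)}$. This uses nothing beyond residual finiteness and Dehn's lemma. Your approach, by contrast, routes through Theorem~\ref{mainthmknots}, whose proof ultimately rests on goodness of $3$-manifold groups, twisted Alexander polynomials, and the Agol--Wise--Przytycki--Wise results. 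So while your deduction is valid and efficient once the main theorem is in hand, it obscures the point the authors are making: detecting the unknot is genuinely elementary, whereas detecting fiberedness and genus in general is not.
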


\begin{proof}
It is a well-known consequence of Dehn's lemma that a knot $K$ is the unknot if and only if $\pi(K)\cong \Z$. Since $H_1(\pi(K);\Z)\cong \Z$ it follows that a knot is the unknot if and only if $\pi(K)$ is abelian.  By \cite{He87} knot groups are residually finite.
It thus follows that a knot is the unknot if and only if all finite quotients are abelian. Since the finite quotients of a group are the same as the finite quotient of its profinite completion it now follows that a knot $K$ is trivial if and only if all finite quotients of $\what{\pi(K)}$ are abelian. We thus showed that the profinite completion of a knot group determines whether or not the knot is trivial.
\end{proof}

Given a knot $K$ we denote by $X(K)_n$ the $n$-fold cyclic cover of $X(K)$. 
We have the   following elementary lemma.

\begin{lemma}\label{lem:sameh1xn}
Let $J$ and $K$ be two knots such that the profinite completions of $\pi(J)$ and $\pi(K)$ are isomorphic. Then for any $n$ we have $H_1(X(J)_n;\Z)\cong H_1(X(K)_n;\Z)$.
\end{lemma}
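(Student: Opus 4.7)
The plan is to identify the $n$-fold cyclic cover on each side of $f\colon \what{\pi(J)}\to \what{\pi(K)}$, restrict $f$ to an isomorphism of the profinite completions of their fundamental groups, and then extract $H_1$ from this.

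First I would exploit that $H_1(X(K);\Z)\cong \Z$ (and similarly for $J$). Every surjection $\pi(K)\onto \Z_n$ factors through the abelianization, and any two such surjections differ by an automorphism of $\Z_n$; hence they all share the same kernel, namely $\pi_1(X(K)_n)$. By Lemma~\ref{lem:samefinitequotients}, $f$ induces a bijection between surjections $\pi(J)\onto \Z_n$ and surjections $\pi(K)\onto \Z_n$. Consequently, the unique index-$n$ normal subgroup with cyclic quotient on each side, viewed as a closed subgroup of the ambient profinite completion, is preserved by $f$. Thus $f$ restricts to an isomorphism between the closures of $\pi_1(X(J)_n)$ and $\pi_1(X(K)_n)$.

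Next I would invoke the standard fact that for a finitely generated residually finite group $G$ and a finite-index subgroup $H\le G$, the closure of $H$ in $\what{G}$ is canonically isomorphic to $\what{H}$ (this rests on Nikolov--Segal, cited in Section~\ref{section:prelim}, so that finite-index subgroups are open and the profinite topology on $H$ coincides with the subspace topology from $\what{G}$). This yields
\[ \what{\pi_1(X(J)_n)} \cong \what{\pi_1(X(K)_n)}. \]
Passing to topological abelianizations and using that for a finitely generated group $G$ one has $\what{G}/\ol{[\what{G},\what{G}]}\cong \what{G^{\mathrm{ab}}}$ (because finite abelian quotients of $G$ and of $\what{G}$ coincide), I would deduce
\[ \what{H_1(X(J)_n;\Z)} \cong \what{H_1(X(K)_n;\Z)} \]
as profinite abelian groups.

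To conclude, I would use that a finitely generated abelian group $A$ is determined up to isomorphism by $\what{A}$: the torsion subgroup of $\what{A}$ recovers the torsion of $A$ (since $\what{\Z}$ is torsion-free), and the free rank is recoverable from, say, $\dim_{\F_p}(\what{A}/p\what{A})$ for any prime $p$ coprime to the order of the torsion. Since $X(J)_n$ and $X(K)_n$ are compact 3-manifolds, $H_1(X(J)_n;\Z)$ and $H_1(X(K)_n;\Z)$ are finitely generated abelian, so they must be abstractly isomorphic. The only step requiring genuine care is the matching of cyclic covers in the first paragraph; but this is essentially immediate from the fact that both knot groups have abelianization $\Z$, so there is no real obstacle to the argument.
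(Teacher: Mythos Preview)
Your proof is correct and follows essentially the same approach as the paper's: pass from $f$ to an isomorphism of the profinite completions of the kernels $\ker\{\pi(J)\to\Z_n\}$ and $\ker\{\pi(K)\to\Z_n\}$, then read off the abelianizations. You simply supply more detail than the paper does---in particular, the justification that the closure of a finite-index subgroup agrees with its profinite completion, and that a finitely generated abelian group is recoverable from its profinite completion---but the underlying argument is the same.
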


\begin{proof}
The isomorphism $f\colon \what{\pi(J)}\to \what{\pi(K)}$ induces an isomorphism
of the profinite completions of $\ker\{\pi(J)\to \Z_n\}$ and $\ker\{\pi(K)\to \Z_n\}$. But if two groups have isomorphic profinite completions the abelianizations have to agree. We thus see that  
\[ H_1(X(J)_n;\Z)\cong H_1(\ker\{\pi(J)\to \Z_n\};\Z)\cong H_1(\ker\{\pi(K)\to \Z_n\};\Z)\cong H_1(X(K)_n;\Z).\]
\end{proof}

This lemma allows us to prove the following proposition.

\begin{proposition}\label{prop:samealex}
Let $J$ and $K$ be two knots such that the profinite completions of $\pi(J)$ and $\pi(K)$ are isomorphic. Then the following hold:
\bn
\item The Alexander polynomial $\Delta_J$ has a zero that is an $n$-th root of unity if and only if $\Delta_K$ has a zero that is an $n$-th root of unity.
\item If neither Alexander polynomial has a zero that is a root of unity, then  $\Delta_J=\pm \Delta_K$.
\en
\end{proposition}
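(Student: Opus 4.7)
My plan is to deduce both parts from Lemma~\ref{lem:sameh1xn} using the classical relationship between $H_1$ of the cyclic covers of a knot exterior and the Alexander polynomial. Write $X_\infty(K)$ for the infinite cyclic cover of $X(K)$ and $A_K := H_1(X_\infty(K);\Z)$ for the Alexander module, a finitely generated torsion module over $\Lambda := \Z\tpm$ whose order is $\Delta_K$. The Wang sequence for the $\Z$-cover $X_\infty(K)\to X(K)_n$ yields a split short exact sequence
\[ 0 \longrightarrow A_K/(t^n-1)A_K \longrightarrow H_1(X(K)_n;\Z) \longrightarrow \Z \longrightarrow 0, \]
so the free rank of $H_1(X(K)_n;\Z)$ equals $1+\dim_{\Q}\ker\bigl(t^n-1\colon A_K\otimes\Q\to A_K\otimes\Q\bigr)$, and whenever $\Delta_K$ and $t^n-1$ are coprime the torsion subgroup is the finite group $A_K/(t^n-1)A_K$, of order $|\op{Res}(\Delta_K(t),\,t^n-1)|$.

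For part (1), Lemma~\ref{lem:sameh1xn} gives $H_1(X(J)_n;\Z)\cong H_1(X(K)_n;\Z)$ for every $n$, so in particular their free ranks agree. By the discussion above the free rank exceeds $1$ exactly when $\Delta$ has an $n$-th root of unity as a zero, and the claim follows.

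For part (2), assume neither $\Delta_J$ nor $\Delta_K$ has a root of unity as a zero. Then for every $n\ge 1$ the torsion subgroups of $H_1(X(J)_n;\Z)$ and $H_1(X(K)_n;\Z)$ are finite, and have the same order by Lemma~\ref{lem:sameh1xn}, giving
\[ \bigl|\op{Res}(\Delta_J(t),\,t^n-1)\bigr|=\bigl|\op{Res}(\Delta_K(t),\,t^n-1)\bigr| \quad \text{for every } n\ge 1. \]
I then invoke Fried's theorem on cyclic resultants: the sequence $n\mapsto |\op{Res}(f(t),\,t^n-1)|$ determines a nonzero integer polynomial $f$ up to multiplication by units of $\Lambda$ and the substitution $t\mapsto t^{-1}$. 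Since knot Alexander polynomials are reciprocal and satisfy $\Delta(1)=\pm 1$, these ambiguities all collapse, and we conclude $\Delta_J=\pm\Delta_K$.

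The delicate step is the appeal to Fried's theorem on cyclic resultants, which has to accommodate the possible presence of unit-circle zeros that are not roots of unity (such as Salem numbers, relevant to the Fintushel--Stern example from the introduction); the reciprocity of $\Delta_K$ combined with our hypothesis that no zero is a root of unity are exactly the conditions under which the reconstruction is unambiguous. If one prefers to avoid the citation, a hands-on argument via M\"obius inversion applied to $|\op{Res}(\Delta,\,t^n-1)|=|a|^n\prod_{d\mid n}\prod_{\Phi_d(\omega)=0}|\Delta(\omega)|$ recovers the individual factors $\prod_{\Phi_d(\omega)=0}|\Delta(\omega)|$ from the cyclic-resultant data, and these (combined with reciprocity and the fact that the roots of $\Delta$ are algebraic integers) pin down $\Delta$ up to sign.
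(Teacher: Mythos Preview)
Your proof is correct and follows essentially the same approach as the paper's: both deduce part~(1) from Lemma~\ref{lem:sameh1xn} together with Fox's formula relating the first Betti number and the torsion order of $H_1(X(K)_n;\Z)$ to the values of $\Delta_K$ at $n$-th roots of unity (your resultant expression is exactly $\prod_{k=1}^n \Delta_K(e^{2\pi i k/n})$), and both obtain part~(2) by invoking Fried's theorem on cyclic resultants. Your presentation via the Wang sequence and your explicit discussion of the ambiguities in Fried's reconstruction is somewhat more detailed than the paper's, but the underlying argument is identical.
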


\begin{proof}
Given a set $S$ we henceforth write $|S|=0$ if $S$ is infinite, otherwise we denote by $|S|$ the number of elements.
Let $K$ be a knot and let $n\in \N$.   Fox \cite{Fo56}, see also \cite{We79,Tu86} showed that 
\[ H_1(X(K)_n;\Z)\cong \Z\oplus A\]
where $A$ is a group with 
\[ |A|=\left| \prod_{k=1}^n \Delta_K\left(e^{2\pi ik/n}\right)\right|.\]
 In particular we have $b_1(X(K)_n)=1$ if and only if  if no $n$-th root of unity is a zero of $\Delta_K(t)$.
 
The first statement of the proposition is now an immediate consequence 
of Lemma~\ref{lem:sameh1xn}. The second statement follows from combining the above formula with Lemma~\ref{lem:sameh1xn} and a deep result of Fried~\cite{Fr88}.
\end{proof}

Now we can  also prove the following corollary which we already mentioned in the introduction.\\

\noindent \textbf{Corollary~\ref{cor:distinguish34-intro}.}
\emph{Let $J$ be the trefoil or the Figure-8 knot. If $K$ is a knot with $\what{\pi(J)}\cong \what{\pi(K)}$, then $J$ and $K$ are equivalent.}\\

\begin{proof}
Let $J$ be the trefoil or the Figure-8 knot and let $K$ be another knot with $\what{\pi(J)}\cong \what{\pi(K)}$. It is well-known that $J$ is a fibered knot with $g(J)=1$.  
It follows from Theorem~\ref{mainthmknots} that $K$ is also a fibered knot with $g(K)=g(J)=1$. From \cite[Proposition~5.14]{BZ85} we deduce that $K$ is  either the trefoil  or the Figure-8 knot. 
Thus it suffices to show that the profinite completion can distinguish the trefoil from the Figure-8 knot. But this is a consequence of
Proposition~\ref{prop:samealex} and the fact that the Alexander polynomial of the trefoil is the cyclotomic polynomial $t^{-1}-1+t$ whereas the Alexander polynomial of the Figure-8 knot is $t^{-1}-3t+t=t^{-1}(t-\frac{3+i\sqrt{5}}{2})(t-\frac{3-i\sqrt{5}}{2})$.
\end{proof}

\subsection{Torus knots}\label{section:torusknots}

In this section we prove Theorem \ref{thm:torusknots} stating that each torus knot  is distinguished, among knots, by the profinite completion of its group. 
First we prove that the profinite completion detects torus knots.

\begin{proposition}\label{prop:torusknots}
Let $J$ be a torus knot. If $K$ is a knot with $\what{\pi(J)}\cong \what{\pi(K)}$, then $K$ is a torus knot.
\end{proposition}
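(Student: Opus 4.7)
The plan has three main stages: pin down the Alexander polynomial of $K$, then force $X(K)$ to be Seifert fibered, and finally invoke the classical characterization of knots with Seifert fibered exteriors.

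Writing $J=T(p,q)$ with $\gcd(p,q)=1$ and $p,q\geq 2$, I would first apply Theorem~\ref{mainthmknots} to conclude that $K$ is fibered of genus $g(K)=g(J)=(p-1)(q-1)/2$; in particular $\Delta_K(t)$ is monic of degree $(p-1)(q-1)$. Next I would invoke the classical factorization
\[ \Delta_J(t)\;=\;\prod_{d\in S}\Phi_d(t),\qquad S\;=\;\{d : d\mid pq,\ d\nmid p,\ d\nmid q\}, \]
so every zero of $\Delta_J$ is a root of unity and each cyclotomic polynomial $\Phi_d$ appears with multiplicity one. By Proposition~\ref{prop:samealex}(1), the set of $n$ for which $\Delta_K$ has an $n$-th root of unity as a zero agrees with that for $\Delta_J$. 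Combining this with Lemma~\ref{lem:sameh1xn}, which provides isomorphisms $H_1(X(J)_n;\Z)\cong H_1(X(K)_n;\Z)$ for every $n$, and with the degree constraint $\deg\Delta_K=(p-1)(q-1)$, one should extract both the support and the multiplicities of the cyclotomic factors of $\Delta_K$ and conclude $\Delta_K=\Delta_J$.

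The crux of the proof, and where I expect the real work to be, is to upgrade the profinite isomorphism to the geometric statement that $X(K)$ is Seifert fibered. Building on the result of Wilton--Zalesskii \cite{WZ14} that profinite completions detect Seifert fiberedness among closed $3$-manifolds, combined with profinite detection of the JSJ decomposition for $3$-manifolds with toroidal boundary, one concludes that $X(K)$ has the same Seifert fibered JSJ structure as $X(J)$, which consists of a single Seifert piece. An alternative route is to show that the non-trivial center of $\pi_1(X(T(p,q)))$ is reflected in $\what{\pi(J)}\cong\what{\pi(K)}$ in a way that forces $\pi_1(X(K))$ itself to have non-trivial center, and then invoke the Seifert Fibered Space Theorem.

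Once $X(K)$ is known to be Seifert fibered and $K$ is non-trivial (since $g(K)>0$), the classical classification of knots in $S^3$ with Seifert fibered exteriors forces $K$ to be a torus knot, completing the argument.
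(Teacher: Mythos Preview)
Your proposal has a genuine gap at precisely the step you flag as ``the crux.'' Neither of your two suggested routes is actually carried out, and both face real obstacles.

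The Wilton--Zalesskii result \cite{WZ14} is for \emph{closed} $3$-manifolds; the paper explicitly remarks, immediately after stating Proposition~\ref{prop:torusknots}, that this proposition is the analogue for knot complements and that the proof is ``quite different.'' Appealing to ``profinite detection of the JSJ decomposition for $3$-manifolds with toroidal boundary'' as a black box is not something available from the paper's references, so this route is not complete as written.

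Your alternative route---passing from the procyclic center of $\what{\pi(J)}\cong\what{\pi(K)}$ to a non-trivial center of $\pi(K)$ itself, then invoking the Seifert Fibered Space Theorem---is the right instinct, but the implication ``$\what{\pi}$ has non-trivial center $\Rightarrow$ $\pi$ has non-trivial center'' is not automatic, and it is exactly where the work lies. The paper does \emph{not} establish that implication directly. Instead it argues by contradiction on the geometric type of $K$. If $K$ were hyperbolic, Long--Reid \cite{LR98} shows $\pi(K)$ is residually finite simple, giving a cofinal tower of simple quotients of $\what{\pi(K)}\cong\what{\pi(J)}$; this is incompatible with $\pi(J)$ having non-trivial center. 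If $K$ is a satellite, the JSJ splitting of $\pi(K)$ is efficient for the profinite topology by \cite{WZ10}, so $\what{\pi(K)}$ is a profinite graph of groups; the procyclic center $\what{Z}$ of $\what{\pi(J)}$ is then shown (via \cite{ZM88}) to lie in every edge group, hyperbolic vertex pieces are excluded by the first step, and a final contradiction comes from comparing the $p$-Sylow of $\what{Z}\cong\what{\Z}$ with that of an edge group $\what{\Z}\times\what{\Z}$.

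Finally, your first paragraph on pinning down $\Delta_K$ is unnecessary here: knowing $\Delta_K=\Delta_J$ does not force $K$ to be a torus knot (plenty of non-torus knots share a torus-knot Alexander polynomial), so that computation does not feed into the Seifert-fibered conclusion. The paper uses none of it for this proposition.
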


Wilton--Zalesskii \cite{WZ14} showed that the profinite completion of the fundamental group determines whether a \emph{closed} 3-manifold is Seifert fibered. 
Proposition~\ref{prop:torusknots} proves the same result for knot complements. Our proof is quite different from the proof provided by Wilton--Zalesskii since our main tool for dealing with hyperbolic JSJ-components is the paper by Long--Reid \cite{LR98}.

\begin{proof}
We argue by contradiction by assuming that either $K$ is a hyperbolic knot or a satellite knot.

In the first case, by \cite{LR98} the group $\pi(K)$ is residually simple. Therefore $\pi(K)$, and so $\pi(J)$, admit co-final towers of finite regular coverings with simple covering groups. But this is impossible for $\pi(J)$, since it has a non-trivial cyclic center. So we can assume that $K$ is a satellite knot.

Since $\pi(J)$ has a non-trivial infinite cyclic center and since abelian subgroups of 3-manifold groups are separable \cite{Ha01}, $\what{\pi(J)}$ has a procyclic center $\what{Z} \cong \what{\mathbb{Z}}$, and the quotient is the profinite completion of the free product of two finite cyclic groups of pairwise prime order, which is centerless by \cite[Thm 3.16]{ZM88}. 

Since $K$ is a satellite knot, the exterior $X(K)=S^3\sm \nu K$ has a non-trivial $JSJ$-splitting. Such a splitting induces a graph-of-groups decomposition of $\pi(K)$. The profinite topology on $\pi(K)$ is efficient for this decomposition which means that the vertex and edge groups are closed and that the profinite topology on $\pi(K)$ induces the full profinite topologies on each vertex and edge group (see \cite[Thm A]{WZ10}). Therefore $\what{\pi(K})$ is a profinite graph of profinite completions of the corresponding vertex and edge groups. Since 
$\what{\pi(K)} \cong \what{\pi(J)}$, the non-trivial procyclic center $\what{Z}$ must belong to each edge group of this graph-of-profinite groups decomposition for
$\what{\pi(K)}$: this follows  from \cite[Thm 3.16]{ZM88}, since the graph of group decomposition of 
$\pi(K)$ is not of dihedral type, otherwise $\pi_(K)$ would be solvable, which is not possible for a non-trivial knot . 

Therefore $\what{Z}$ belongs to the profinite completion of each vertex group of the graph-of-groups decomposition of $\pi(K)$ induced by the JSJ-splitting. Since each vertex corresponds to a hyperbolic or a Seifert piece in the geometric decomposition of $S^3 \setminus K$, it follows from the first step that all the pieces are Seifert fibered and 
thus $X(K)=S^3\sm \nu K$ is a graph 3-manifold.

Let $\what{\mathbb{Z} \times \mathbb{Z}}$ be an edge group of the graph-of-profinite groups decomposition of $\what{\pi(K)}$.  It corresponds to the profinite completion of the corresponding edge group $\mathbb{Z} \times \mathbb{Z}$ of the JSJ-graph-of-groups decomposition of $\pi_1(M)$. The two vertex groups $G_1$ and $G_2$ containing  this edge group 
correspond to the fundamental groups of Seifert pieces of $M$. Each group $G_i, i=1, 2$, is an extension:
$$1 \to \mathbb{Z}= Z_i \to G_i \to \Gamma_i \to 1,$$ where $\Gamma_i, i= 1, 2,$ is a free product of finite cyclic groups. The profinite completion $\what{\Gamma_i}$ is the free product (in the profinite category) of finite cyclic groups (see \cite[Exercice~9.2.7]{RZ10}, and hence is centerless \cite[Thm 3.16]{ZM88}, except perhaps if it is 
$\what{\mathbb{Z}_2 \star \mathbb{Z}_2}$. But in this last case by \cite[Proposition 4.3]{GZ11} $\Gamma_i$ would be isomorphic to $\mathbb{Z}_2 \star \mathbb{Z}_2$ and the Seifert piece would contain an embedded Klein bottle, which is impossible
in $S^3$. Therefore each profinite completion $\what{G_i}, i=1, 2,$ has a procyclic center 
$\what{Z_i}$ which must contain the procyclic center $\what{Z}$ of $\what{\pi(K)}$, since the quotient $\what{G_i}/\what{Z_i}$ is centerless. 

For each prime $p$, the $p$-Sylow subgroup $\what{Z}_{(p)}$ of $\what{Z}$ is of finite index in each $p$-Sylow subgroup $\what{Z_i}_{(p)} \subset \what{Z_i}$, by \cite[Proposition 2.7.1]{RZ10}.
Since the subgroup generated by $Z_1$ and $Z_2$ is of finite index in the edge group 
$\mathbb{Z} \times \mathbb{Z}$, the subgroup generated by $\what{Z_1}$ and $\what{Z_2}$ is of finite index, say $n$, in  the profinite edge group  $\what{\mathbb{Z} \times \mathbb{Z}} \cong  \what{\mathbb{Z}} \times \what{\mathbb{Z}}$. It follows that the $p$-Sylow subgroup $\what{Z}_{(p)} \cong \what{\mathbb{Z}}_{(p)}$ of $\what{Z}$ is of finite index in the p-Sylow subgroup 
$\what{\mathbb{Z}}_{(p)} \times \what{\mathbb{Z}}_{(p)}$ of the edge group for a prime $p$ which does not divide $n$. This gives the desired contradiction, since $\what{\mathbb{Z}}_{(p)} \times \what{\mathbb{Z}}_{(p)}$ is a free $ \what{\mathbb{Z}}_{(p)}$-module of rank $2$.
 \end{proof}

We give now the proof of Theorem \ref{thm:torusknots}.

By Proposition \ref{prop:torusknots} $K$ is a torus knot. If two torus knots $T_{p,q}$ and $T_{r,s}$ have isomorphic profinite completions, their quotients 
$\what{\mathbb{Z}_p \star \mathbb{Z}_q}$ and $\what{\mathbb{Z}_r \star \mathbb{Z}_s}$ by the procyclic center are also isomorphic. Then it follows again  from 
\cite[Proposition 4.3]{GZ11} that the groups $\mathbb{Z}_p \star \mathbb{Z}_q$ and $\mathbb{Z}_r \star \mathbb{Z}_s$ are isomorphic, and so the knots $K$ and $J$ are equivalent.

Another more topological argument is that the two torus knots $T_{p,q}$ and $T_{r,s}$ have the same genus by Theorem \ref{mainthmknots}. So $(p-1)(q-1)=(r-1)(s-1)$.
Moreover the first cyclic covers of the knot complements with maximal Betti number correspond to the $pq$-cover for $T_{p,q}$ and the $rs$-covers for $T_{r,s}$. It follows that 
$pq = rs$ and thus $p + q = r + s$ by the genus equality. Therefore $(p, q) = (r, s)$ or $(p, q) = (s, r)$, in both cases the torus knots $T_{p,q}$ and $T_{r,s}$ are equivalent.

\begin{remark} Torus knots are all commensurable. Moreover they are cyclically commensurable if they have the same genus. However they are distinguished by the profinite completions of their groups. In the next section we discuss the case of hyperbolic knots.
\end{remark}


\subsection{Commensurable knots}\label{section:commensurable}

In this section we prove Theorem \ref{thm:commensurable} which shows that two cyclically commensurable hyperbolic knots can be distinguished by the profinite completions of their groups, provided that their Alexander polynomials are not a product of cyclotomic polynomials.
The proof relies on the following proposition:

\begin{proposition}\label{prop:commensurable} Let $K_1$ and $K_2$ be two distinct and cyclically commensurable hyperbolic knots. Then there is  a compact orientable 3-manifold $Y$ with 
$H_1(Y, \mathbb{Z}) = \mathbb{Z}$ and two coprime integers $p_1$ and $p_2$ such that $X(K_1)$ is a $p_1$-cyclic cover of $Y$ and $X(K_2)$ is a $p_2$-cyclic cover of $Y$. 
\end{proposition}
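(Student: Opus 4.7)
The strategy is to construct $Y$ as a common cyclic quotient of the two knot exteriors, invoking the finiteness of the isometry group of a hyperbolic common cover together with the structural results on cyclically commensurable hyperbolic knots in $S^3$ from \cite{BBCW12}.

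First, by the cyclic commensurability hypothesis, I would fix a compact 3-manifold $W$ that is a finite cyclic cover of both $X(K_1)$ and $X(K_2)$, with deck transformations $\langle \sigma_i \rangle \cong \mathbb{Z}/n_i$ for the cover $W \to X(K_i)$, $i=1,2$. Since $K_i$ are hyperbolic, $W$ is a finite-volume hyperbolic manifold, so by Mostow rigidity $\mathrm{Isom}^+(W)$ is finite and contains both $\sigma_1$ and $\sigma_2$. Let $H := \langle \sigma_1, \sigma_2 \rangle$. The key structural claim is that $W$ can be chosen so that $H$ is cyclic and acts freely on $W$; this rests on the analysis of the commensurator of a hyperbolic knot group in $\mathrm{PSL}(2,\mathbb{C})$ carried out in \cite{BBCW12}, using strongly that $H_1(X(K_i);\mathbb{Z}) = \mathbb{Z}$ and that the knots sit in $S^3$.

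Given this, set $Y := W/H$, a compact orientable 3-manifold with toroidal boundary. The inclusion $\langle \sigma_i \rangle \leq H$ produces a cyclic cover $X(K_i) = W/\langle\sigma_i\rangle \to Y$ of degree $p_i := |H|/n_i$. Coprimality $\gcd(p_1, p_2) = 1$ is equivalent to $|H| = \mathrm{lcm}(n_1,n_2)$, that is, to $\sigma_1$ and $\sigma_2$ together generating $H$; this is built into the (minimal) choice of $H$. Distinctness $K_1 \neq K_2$ translates into $\langle \sigma_1 \rangle \neq \langle \sigma_2 \rangle$, so both $p_1, p_2 > 1$.

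To verify $H_1(Y;\mathbb{Z}) = \mathbb{Z}$, I would use the transfer map $\mathrm{tr}_i \colon H_*(Y) \to H_*(X(K_i))$, which satisfies that its composition with the pushforward equals multiplication by $p_i$ on $H_*(Y)$. Nonempty toroidal boundary forces $b_1(Y) \geq 1$, while the rational transfer $H_1(Y;\mathbb{Q}) \hookrightarrow H_1(X(K_i);\mathbb{Q}) = \mathbb{Q}$ is injective, so $b_1(Y) = 1$. Any torsion $\tau \in H_1(Y;\mathbb{Z})$ has $\mathrm{tr}_i(\tau) \in H_1(X(K_i);\mathbb{Z}) = \mathbb{Z}$, which is torsion-free, hence $\mathrm{tr}_i(\tau) = 0$ and therefore $p_i\tau = 0$. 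Coprimality of $p_1$ and $p_2$ then forces $\tau = 0$, giving $H_1(Y;\mathbb{Z}) = \mathbb{Z}$. The hard part of the proof is the first step: showing that $H$ can be arranged to be cyclic and to act freely, so that $Y$ is a manifold rather than an orbifold and both covers are genuinely cyclic. This depends on the commensurator analysis of cyclically commensurable hyperbolic knots in $S^3$ in \cite{BBCW12}, exploiting the rigidity supplied by knot groups having abelianization $\mathbb{Z}$.
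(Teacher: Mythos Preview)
Your overall architecture matches the paper's: build $Y$ as a cyclic quotient of a common cyclic cover of the two knot exteriors. However, the step you flag as ``the hard part'' --- arranging $H=\langle\sigma_1,\sigma_2\rangle$ to be cyclic and to act freely --- is precisely where all the content lies, and your proposal does not indicate how \cite{BBCW12} is actually used to achieve this. Starting from an arbitrary common cyclic cover $W$, there is no a priori reason the two deck transformations commute, so no reason $H$ is cyclic.

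The paper does not try to tame an arbitrary $W$. Instead it goes \emph{down} first: by \cite[Proposition~4.7]{BBCW12} the quotient orbifolds $\mathcal{Z}_{K_i}=X(K_i)/Z(K_i)$ coincide, and by \cite[Propositions~5.7,~5.8]{BBCW12} this common orbifold sits in two orbi-lens spaces giving coprime integers $p_1,p_2$. Crucially, \cite[Theorem~1.5]{BBCW12} says $\mathcal{Z}$ fibers over $S^1$; lifting this fibration shows both $X(K_i)$ fiber with the \emph{same} fiber $F$ and pseudo-Anosov monodromies $\phi_i$ satisfying $\phi_1^{ap_2}=\phi_2^{ap_1}$. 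The uniqueness of roots of pseudo-Anosov mapping classes \cite{BoPa09} then forces $\phi_1^{p_2}=\phi_2^{p_1}$, which pins down the specific common cover $N$ (the $p_2$-cyclic cover of $X(K_1)$, simultaneously the $p_1$-cyclic cover of $X(K_2)$). On $N$ the two deck transformations are shifts along the $S^1$-direction of the fibration, hence commute, and since $\gcd(p_1,p_2)=1$ they generate a cyclic group of order $p_1p_2$ acting freely. None of this --- the fibering, the coprimality from the orbi-lens space structure, or the Bonatti--Paris root-uniqueness --- is visible in your sketch, and each is essential.

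Your transfer argument for $H_1(Y;\Z)\cong\Z$ is correct and is a genuine alternative to the paper's approach (which instead recognizes $Y$ as the exterior of a primitive knot in a lens space). That part you may keep.
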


\begin{proof}

For a hyperbolic knot $K$, the positive solution of the Smith conjecture implies that the subgroup $Z(K)$ of $\operatorname{Isom}^+(X(K))$ which acts freely on $\partial X(K)$ is cyclic, and that the action of 
$Z(K)$ extends to a finite cyclic action on $S^3$ which is conjugated to an orthogonal action. Therefore the orientable orbifold 
$\mathcal{Z}_{K} = X(K)/Z(K)$ is a knot exterior in the quotient $\mathcal{L} = S^3/Z(K)$ which is an orbi-lens space. The notion of orbi-lens space was introduced in \cite[Section 3]{BBCW12}.
An orbi-lens space $\mathcal{L}$ is a  $3$-orbifold whose underlying space $|\mathcal{L}|$ is a lens space and the singular locus $\Sigma(\mathcal{L})$ is a closed submanifold of the union of the cores $C_1 \cup C_2$ of a genus one Heegaard splitting $V_1 \cup V_2$ of the underlying space $|\mathcal{L}|$. In particular, there are coprime positive integers $a_1, a_2 \geq 1$ such that a point of $C_j$ has isotropy group $\mathbb Z / a_j$, and thus the orbifold fundamental group $\pi_1^{orb}(\mathcal{L}) \cong \mathbb Z / (a_1 a_2 |\pi_1(|\mathcal{L}|)|)$. We use the notation $\mathcal{L}(p,q;a)$ to denote such an orbi-lens space
with $a_1 = a$ and $a_2 = 1$. When $a = 1$, $\mathcal{L}(p,q; a)$ is the lens space $L(p,q)$.

By \cite[Propositions 4.7]{BBCW12} if $K_1$ and $K_2$ are two distinct cyclically commensurable hyperbolic knots, then up to orientation preserving homeomorphism, 
$\mathcal{Z}_{K_1} = \mathcal{Z}_{K_2}$. The orbifold $\mathcal{Z} = \mathcal{Z}_{K_1} = \mathcal{Z}_{K_2}$ embeds as a knot exterior in both orbi-lens spaces 
$\mathcal{L}(p_1,q_1; a) = S^3/Z(K_1)$
and  $\mathcal{L}(p_2,q_2; a) = S^3/Z(K_2)$, with $ap_1 = \vert Z(K_1) \vert$, $ap_2 = \vert Z(K_2) \vert$ and $p_1$ coprime to $p_2$, by \cite[Propositions 5.7 and 5.8]{BBCW12}.
In particular the $ap_2$ cyclic cover $M$ of $X(K_1)$ coincides with the $ap_1$ cyclic cover of $X(K_2)$.

By \cite[Thm 1.5]{BBCW12} the orbifold $\mathcal{Z}$ admits a fibration by $2$-orbifolds with base the circle. This fibration lifts to 
fibrations by surfaces over the circle in the exteriors $X(K_1)$, $X(K_2)$  and in their common cyclic covering $M$. So the knots $K_1$ and $K_2$ are fibred knots, and their fibrations lift to the same fibration over the circle in $M$. It follows that the fiber for these three fibrations is  the same surface $F$, and the monodromy  for the fibration on $M$ is $\phi= \phi_1^{ap_2} = \phi_2^{ap_1}\colon F \to F$,
where $\phi_1$ is the monodromy of $K_1$ and $\phi_2$ the monodromy of $K_2$. Since $K_1$ and $K_2$ are hyperbolic knots, the monodromies $\phi_1$ and $\phi_2$ are pseudo-Anosov homeomorphisms of $F$. The uniqueness of the root of a pseudo-Anosov element in the mapping class group of a surface with boundary, \cite[Thm 4.5]{BoPa09}, implies that 
$\phi_1^{p_2} = \phi_2^{p_1}$. Therefore the $p_2$-cyclic cover $N$ of $X(K_1)$ coincides with the $p_1$-cyclic cover of $X(K_2)$. Since $p_1$ and $p_2$ are coprime, 
the deck transformations of the cyclic covers $N \to X(K_1)$ and $N \to X(K_2)$ generate a cyclic subgroup $C$ of $\operatorname{Isom}^+(N)$ of order $p_{1}p_{2}$. The quotient
$Y = N/C$ is cyclically covered by $X(K_1)$ with order $p_1$ and by $X(K_2)$ with order $p_2$. It follows that $Y$ is a manifold, since the order of the isotropy group 
of a point of $Y$ must divide $p_1$ and $p_2$. 

Therefore $Y$ is the exterior of a primitive knot in the lens space $L(p_1,q_1)$,  since its preimage in $S^3$ is the knot $K_1$. 
In particular $H_1(Y, \mathbb{Z}) = \mathbb{Z}$.
\end{proof}

We give now the proof of Theorem \ref{thm:commensurable}.

Let $K_1$ and $K_2$ be two distinct and cyclically commensurable hyperbolic knots. Let $Y$ be the compact orientable 3-manifold provided by Proposition \ref{prop:commensurable}. 
Then $H_1(Y, \mathbb{Z}) = \mathbb{Z}$ and there are two coprime integers $p_1$ and $p_2$ such that $X(K_1)$ is a $p_1$-cyclic cover of $Y$, while $X(K_2)$ is a $p_2$-cyclic cover of $Y$.
Let $\widetilde{Y}$ be the maximal free abelian cover  of $Y$, then  
its first homology group $H_1(\widetilde{Y}, \mathbb{Z})$ is a  torsion module over $\mathbb{Z}[t,t^{-1}]$, whose order is $\Delta_{Y} \in \mathbb{Z}[t,t^{-1}]$. 

Given a non-zero polynomial $p\in \ct$ with top coefficient $c$ and zeros $z_1,\dots,z_m$  we denote by 
\[m(p):=|c|\cdot \prod_{i=1}^{m} \max\{1,|z_i\}\]
 its Mahler measure.
Furthermore, given a topological space $Z$ we write $t(Z):=|\operatorname{Tor}(H_1(Z;\Z))|$. Now let $Z$ be a 3-manifold with $H_1(Z;\Z)\cong \Z$. We denote by $\Delta_Z(t)\in \zt$ the corresponding Alexander polynomial and  given $n\in \N$ we denote by $Z_n$ the cover corresponding to the epimorphism  
$\pi_1(Z) \to H_1(Z, \mathbb{Z}) \cong \mathbb{Z} \to \mathbb{Z}/n\mathbb{Z}$.   Silver--Williams \cite[Theorem~2.1]{SW02}
proved that \[\lim_{ n \to \infty} \frac{\log(t(Z_n))}{n} = \ln(m(\Delta_{Z}(t))).\]

By applying this equation to $X(K_1), X(K_2)$ and $Y$  it follows, see \cite[Section 3.4.1]{Fr13}, that:
$$\ln(m(\Delta_{K_1})) = \lim_{ k \to \infty} \frac{\log(t(X(K_1)_k))}{k} = p_1 \lim_{ k \to \infty} \frac{\log(t(Y_{kp_1}))}{kp_1} = p_{1} \ln(m(\Delta_{Y}(t))),$$ and
$$\ln(m(\Delta_{K_2})) = \lim_{ k \to \infty} \frac{\log(t(X(K_2)_k))}{k} = p_2 \lim_{ k \to \infty} \frac{\log(t(Y_{kp_2}))}{kp_2} = p_{2} \ln(m(\Delta_{Y}(t)))$$.

If $\what{\pi(K_1)}\cong \what{\pi(K_2)}$, then $t(X(K_1)_k) = t(X(K_2)_k)$ for all $k \geq 1$ and thus $p_1 = p_2$ if $\ln(m(\Delta_{Y})) \neq 0$. This is the case if 
the Alexander polynomial $\Delta_{K_1}$ has a zero that is not a root of unity, because then by Kronecker's theorem, see \cite[Theorem~4.5.4]{Pr10}, $\Delta_{K_1}$ has a zero $z$ with $|z|>1$, hence the Mahler measure $m(\Delta_{K_1}) \neq 1$ and $\ln(m(\Delta_{K_1})) \neq 0$. 
But, in this case this contradicts the fact that $p_1$ and $p_2$ are coprime.

\begin{remark} It follows from the proof of Theorem \ref{thm:commensurable} that the Mahler measures of the Alexander polynomials of two distinct and cyclically commensurable hyperbolic knots cannot be equal, except if it is equal to $1$. 

\end{remark}

A knot group is biorderable if it admits a strict total ordering  of its elements which is invariant under multiplication on both sides. By \cite[Thm 1.1]{CR12}, if the group of a fibered knot $K$ is bi-orderable, then the Alexander polynomial of $K$ has a positive real root. Hence, we get the following corollary:

\begin{corollary} Let $K_1$ and $K_2$ be two cyclically commensurable hyperbolic knots. If $\pi(K_1)$ is bi-orderable and $\what{\pi(K_1)}\cong \what{\pi(K_2)}$, then $K_1$ and  $K_2$ are equivalent.
\end{corollary}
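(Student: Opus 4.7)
The plan is to reduce the corollary to Theorem~\ref{thm:commensurable}. Since $K_1$ and $K_2$ are already assumed to be cyclically commensurable hyperbolic knots with $\what{\pi(K_1)}\cong \what{\pi(K_2)}$, the only missing hypothesis of Theorem~\ref{thm:commensurable} is that $\Delta_{K_1}$ has at least one zero which is not a root of unity. The strategy is therefore to extract such a zero from bi-orderability via \cite[Thm~1.1]{CR12}.

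First I would establish that $K_1$ is fibered. This does not require any additional argument beyond what already appears in the proof of Proposition~\ref{prop:commensurable}: the key input \cite[Thm~1.5]{BBCW12} produces a fibration of the common quotient orbifold $\mathcal{Z}$ over $S^1$ by $2$-orbifolds, and this fibration lifts to honest surface fibrations of the knot exteriors $X(K_1)$ and $X(K_2)$. In particular $K_1$ is a fibered knot, which is precisely the hypothesis required to apply the Clay--Rolfsen result.

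Next, because $\pi(K_1)$ is bi-orderable and $K_1$ is fibered, \cite[Thm~1.1]{CR12} guarantees that the Alexander polynomial $\Delta_{K_1}(t)$ has a positive real root $r$. The only positive real number which is a root of unity is $1$, but $\Delta_K(1)=\pm 1\neq 0$ for every knot $K$, so $r\neq 1$. Consequently $r$ is a zero of $\Delta_{K_1}$ which is not a root of unity.

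Having verified this last condition, I would then apply Theorem~\ref{thm:commensurable} verbatim to conclude that $K_1$ and $K_2$ are equivalent. The only potential obstacle I can foresee is the initial observation that cyclically commensurable hyperbolic knots are automatically fibered; once this is noted, the proof reduces to a short chain of citations and the arithmetic remark that a positive real root of the Alexander polynomial of a knot cannot equal $1$ and hence cannot be a root of unity.
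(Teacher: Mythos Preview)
Your proposal is correct and follows essentially the same route as the paper: the paper simply cites \cite[Thm~1.1]{CR12} (bi-orderability of a fibered knot group forces a positive real root of the Alexander polynomial) and then says ``Hence, we get the following corollary,'' leaving the reader to supply exactly the details you wrote out---that cyclically commensurable hyperbolic knots are fibered via \cite[Thm~1.5]{BBCW12}, that a positive real root cannot be $1$ since $\Delta_{K_1}(1)=\pm 1$, and that one then invokes Theorem~\ref{thm:commensurable}. One minor remark: the fiberedness input from \cite{BBCW12} applies when $K_1$ and $K_2$ are distinct; if they are already equivalent there is nothing to prove, so you may want to dispose of that trivial case in a clause.
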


\begin{remark} A strong conjecture (see \cite[Conjecture1]{Le14}) states that the volume of the complement of a hyperbolic knot is determined by the growth of homology torsions of its finite covers, and thus by 
the profinite completion of its group. This would imply that cyclically commensurable hyperbolic knots are distinguished by the profinite completion of their group. It would imply furthermore that given a knot $K$ there are only finitely many hyperbolic knots whose groups have a profinite completion isomorphic to $\what{\pi(K)}$.
\end{remark}

\end{document}